\tikzstyle{vertex}=[circle,draw=black,fill=black,inner sep=0,minimum size=5pt,text=white,font=\footnotesize]
\theoremstyle{plain}
\newtheorem{theorem}{Theorem}
\newtheorem{claim}[theorem]{Claim}
\newtheorem{lemma}[theorem]{Lemma}
\theoremstyle{definition}
\newtheorem{definition}{Definition}
\newtheorem{rmk}[theorem]{Remark}
\DeclareMathOperator{\supp}{supp}
\title{Explicit constructions of optimal blocking sets and minimal codes}
\author{Anurag Bishnoi\thanks{Delft University of Technology, \emph{e-mail}: \textbf{A.Bishnoi@tudelft.nl}} 
~and Istv\'an Tomon\thanks{Ume\r{a} University, \emph{e-mail}: \textbf{istvantomon@gmail.com}, Research supported in part by the Swedish Research Council grant VR 2023-03375.}}
\date{}
\begin{document}
	\sloppy 
	
	\maketitle
\begin{abstract}
 A strong $s$-blocking set in a projective space is a set of points that intersects each codimension-$s$ subspace in a spanning set of the subspace. We present an explicit construction of such sets in a $(k - 1)$-dimensional projective space over $\mathbb{F}_q$ of size $O_s(q^s k)$, which is optimal up to the constant factor depending on $s$. This also yields an optimal explicit construction of affine blocking sets in $\mathbb{F}_q^k$ with respect to codimension-$(s+1)$ affine subspaces, and of $s$-minimal codes. Our approach is motivated by a recent construction of Alon, Bishnoi, Das, and Neri of strong $1$-blocking sets, which uses expander graphs with a carefully chosen set of vectors as their vertex set. The main novelty of our work lies in constructing specific hypergraphs on top of these expander graphs, where tree-like configurations correspond to strong $s$-blocking sets. We also discuss some connections to size-Ramsey numbers of hypergraphs, which might be of independent interest.
\end{abstract}

\section{Introduction}

A blocking set in a finite projective or affine space is a set of points that intersects every subspace of a given dimension non-trivially. 
In graph terminology, these are vertex covers of the hypergraphs whose vertex sets are points of the finite space and edges are the subspaces of the fixed dimension. 
These objects have been studied extensively in finite geometry \cite{blokhuis2011blocking, brouwer1978blocking}, and many connections have been found with related areas like coding theory. 
A natural strengthening of this object, that incorporates more geometrical structure, is a \textit{strong} $s$-\textit{blocking set}: a set $B$ of points in the projective space $\mathrm{PG}(k - 1, q)$, that meets every subspace $S$ of codimension-$s$ in a set of points that spans $S$. 
We can also think of $B$ as a collection of pairwise linearly independent vectors 
in $\mathbb{F}_q^k$ such that the span $\langle B \cap S \rangle$ is equal to $S$ for every vector subspace $S$ of (vector) dimension $k - s$. 
For $s = 1$, these objects are simply known as strong blocking sets \cite{davydov2011linear, heger2021short}, and they have also been studied under the names of cutting blocking sets \cite{alfarano2022geometric, bonini2021minimal} and generating sets \cite{fancsali2014lines, fancsali2016higgledy}. 
Finding the smallest size of a strong blocking set is a major open problem in finite geometry, whose importance has increased significantly in the last few years due to its connection to finding short minimal codes \cite{alfarano2022geometric, tang2021full}. 
More recently, it has been shown in \cite{bishnoi2024blocking} that strong blocking sets of size $n$ over $\mathbb{F}_3^k$ are equivalent to linear trifferent codes of length $n$ and dimension $k$, thus tying them to another important open problem in information theory, \textit{the trifference problem}. 

In \cite[Lemma 1.2]{bishnoi2024blocking}, it was shown that a strong $s$-blocking set of size $n$ in the projective space $\mathrm{PG}(k -1, q)$ gives rise to an affine blocking set of size $(q - 1)n + 1$ in $\mathbb{F}_q^k$ with respect to the $(k - s - 1)$-dimensional affine subspaces. 
An application of the polynomial method \cite{brouwer1978blocking}, combined with a geometric argument \cite[Section 3]{ball2011polynomial}, then implies that a strong $s$-blocking set in $\mathrm{PG}(k-1, q)$ must be of size 
at least $(q^{s+1} - 1)(k - s)/(q - 1)$. 
For $s = 1$, this lower bound was improved in \cite[Theorem 1.4]{bishnoi2024blocking} to $c_q (q + 1) ( k - 1)$ for a constant $c_q > 1$ that does not have a closed formula, but it can be computed by solving an equation involving the $q$-ary entropy function and the linear-programming bound from coding theory. 
The best upper bounds are obtained by taking a random collection of $s$-dimensional (projective) subspaces \cite[Remark 3.4]{bishnoi2024blocking} and they are of the order $(q^{s + 1} - 1)((s + 1)(k - s - 1))/(q - 1)$, which is roughly $s + 1$ times the lower bound. 
Therefore, it is still an open problem to determine the smallest size of a strong $s$-blocking set for every $s \geq 1$.

As the random construction suggests, a useful way of constructing strong $s$-blocking sets is by using a collection of $s$-dimensional projective subspaces in $\mathrm{PG}(k - 1, \mathbb{F})$, for an arbitrary field $\mathbb{F}$, whose union meets every codimension-$s$ subspace in a spanning set; this is also known as a `higgledy-piggledy' arrangement of subspaces \cite{fancsali2016higgledy} and it has connections with subspace designs \cite{guruswami2016explicit}. 
Over characteristic $0$ fields, or finite fields of sufficiently large size with respect to the dimension, we know an explicit construction of such higgledy-piggledy subspaces \cite{fancsali2016higgledy}, and thus strong $s$-blocking sets, which is a constant factor away (for a fixed $s$ and $k$) from the lower bound. 
The main challenge is then to give constructions in $\mathrm{PG}(k - 1, q)$ where both $q$ and $s$ are fixed, and $k \rightarrow \infty$. This is also the setting that is more interesting from a coding-theoretic point of view where we need a fixed alphabet size. 

In \cite{alon2024strong}, an explicit construction of size $O(q k)$ was obtained for strong $1$-blocking sets in $\mathrm{PG}(k - 1, q)$ using constant-degree expander graphs. 
This solved the main open problem on constructing short minimal codes because of their known equivalence with strong blocking sets \cite{alfarano2022geometric, tang2021full}. 
Here, a linear code $C \leq \mathbb{F}_q^n$ is called \emph{minimal} if there are no two linearly independent codewords $x, y$ in $C$ for which the support of $x$ is a  subset of the support of $y$. 
A natural generalization of this, called \emph{$s$-minimal code}, is a linear code $C$ that contains no two distinct $s$-dimensional subspaces $U, V$ for which the support of $U$ is contained in the support of $V$. Here, the support of a vector subspace is the set of coordinate positions where at least one vector in the subspace has a non-zero entry. 

\subsection{Our results}

We give the first explicit constructions of strong $s$-blocking sets, whose size is optimal as a function of both $q$ and $k$ for $s > 1$. In particular, we explicitly construct  strong $s$-blocking sets of size $O_s(q^s k)$ in $PG(k-1,q)$. This is indeed optimal up to the dependence of the constant factor on $s$ by the aforementioned lower bound $(q^{s+1}-1)(k-s)/(q-1)$. 

Our construction is a far reaching generalization of the approach of \cite{alon2024strong}, which constructs a strong $1$-blocking set. In \cite{alon2024strong}, the authors consider an expander graph $G$, whose vertices are $O(k)$ vectors in `general position'. Then the desired strong $1$-blocking set is  the collection of $1$-dimensional subspaces, that is, points of $\mathrm{PG}(k - 1, q)$, that are contained in the union of the $2$-dimensional subspaces spanned by the edges of $G$. We extend this approach to hypergraphs. We define an $(s+1)$-uniform hypergraph $H$, whose vertex set is a set of $O_s(k)$ vectors in $\mathbb{F}_q^k$ in `general position'. Then our strong $s$-blocking set $B$ is the collection of $1$-dimensional subspaces that are contained in the union of the $(s+1)$-dimensional subspaces spanned by the edges of $H$.  We build $H$ on top of an expander graph $G$ of degree $O_s(1)$, by including all $(s+1)$-tuples of vertices of distance at most $s+1$ from a given vertex in $G$. This ensures that $H$ has $O_s(k)$ edges, and thus $B$ has $O_s(q^sk)$ elements. We discuss expander graphs and the notion of `general position' in Section \ref{sect:background}. Given a codimension-$s$ subspace $L$, the intersection of $B$ and $L$ naturally gives rise to a certain subhypergraph in $H$. We argue that the existence of large tree-like configurations in this hypergraph ensures that $L\cap B$ spans $L$. These tree-like configurations and their relationship to spanning sets is discussed in Section \ref{sect:trees}.

First, we present a construction for $s=2$ to illustrate the core of our ideas, which can be found in Section \ref{sect:s=2}. Then, in Section \ref{sect:size-ramsey}, we explore connections of strong blocking sets to a popular topic of extremal combinatorics, known as size-Ramsey numbers. We use recent results of Letzter, Pokrovskiy and Yepremyan \cite{LPY} about the size-Ramsey numbers of tight paths to construct  strong $s$-blocking sets of size $O_s(q^sk)$ for large prime powers $q$ with respect to $s$. Here, unfortunately, the constant hidden by the $O_s(.)$ notation is astronomical as a function of $s$. We improve this in Section \ref{sect:large_prime}, where we present a self-contained construction of strong $s$-blocking sets of size at most $2^{O(s^2\log s)}q^sk$ for every prime power $q$ sufficiently large with respect to $s$. 
We treat small prime powers $q$ separately in Section \ref{sect:small_prime}, where we provide explicit constructions of size at most $q^{O(s^2)}k$ (which is of the promised form $O_s(q^sk)$ assuming $q$ is bounded by a function of $s$).

Finally, by a known duality between strong $s$-blocking sets and $s$-minimal codes (see Example 3.2 in \cite{xu2024}), these also provide the first explicit constructions of asymptotically optimal $s$-minimal codes. For completeness, we also discuss this relationship between strong $s$-blocking sets and $s$-minimal codes in Section~\ref{sec:coding_background}.

\section{Background}\label{sect:background}

\subsection{Expander graphs}
In this section, we define and collect some useful properties of expander graphs. 

\begin{definition}[Expander graph]
    An \emph{$(n,d,\lambda)$-graph} is a $d$-regular connected graph on $n$ vertices such that every eigenvalue of the adjacency matrix other than $d$ is at most $\lambda$ in absolute value.
\end{definition}

Expander graphs are of high importance in computer science \cite{AC,HLW}, coding theory \cite{alon2024strong,sipser1996expander}, and combinatorics \cite{LPY}, as witnessed by countless applications. A family of constant degree expanders usually refers to some infinite family of $(n_i,d,\lambda)$-expanders, where $n_i\rightarrow\infty$, and $\lambda<\varepsilon d$ for some $\varepsilon\in (0,1)$. The Alon-Boppana theorem tells us that an $(n,d,\lambda)$-graph can only exist if $\lambda\geq 2\sqrt{d-1}-o_n(1)$. Graphs achieving this theoretical barrier are called \emph{Ramanujan graphs} and the celebrated result of Lubotzky, Phillips and Sarnak \cite{LPS} gives explicit constructions of such graphs for an infinite family of parameters.

\begin{lemma}\label{lemma:Ramanujan}
Let $q>p$ be primes congruent to 1 modulo 4. Then there is an explicit construction of an $(n,p+1,2\sqrt{p})$-graph for some $n=\Theta(q^3)$.
\end{lemma}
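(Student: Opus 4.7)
The plan is to invoke the celebrated Cayley graph construction of Lubotzky, Phillips, and Sarnak. I would take the vertex set to be the group $G = PGL_2(\mathbb{F}_q)$ (or $PSL_2(\mathbb{F}_q)$, depending on whether $p$ is a quadratic residue modulo $q$), which has order $q(q^2 - 1) = \Theta(q^3)$, matching the claimed $n = \Theta(q^3)$. To produce a connection set of size $p+1$, I would apply Jacobi's four-square theorem: the number of integer solutions to $a_0^2 + a_1^2 + a_2^2 + a_3^2 = p$ with $a_0 > 0$ odd and $a_1, a_2, a_3$ even is exactly $p+1$, and these pair up naturally into conjugate pairs of Hurwitz quaternions of norm $p$.

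Next, I would use the assumption $q \equiv 1 \pmod 4$ to fix a square root $\iota$ of $-1$ in $\mathbb{F}_q$, and map each quaternion $\alpha = a_0 + a_1 i + a_2 j + a_3 k$ of norm $p$ to the matrix
$$M(\alpha) = \begin{pmatrix} a_0 + \iota a_1 & a_2 + \iota a_3 \\ -a_2 + \iota a_3 & a_0 - \iota a_1 \end{pmatrix} \in G.$$
Because quaternion conjugation corresponds to matrix inversion in $G$ (the determinant being the reduced norm $p$, which is invertible since $q > p$), the resulting set $S$ of $p+1$ matrices is symmetric, so the Cayley graph $\mathrm{Cay}(G, S)$ is an undirected $(p+1)$-regular graph. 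Connectedness of this Cayley graph follows from the fact that $S$ generates $G$, which is proved via strong approximation for the quaternion algebra; this is a classical input I would cite rather than reprove.

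The nontrivial part is the spectral bound $\lambda \leq 2\sqrt{p}$. Via the standard dictionary, the eigenvalues of $\mathrm{Cay}(G, S)$ correspond to Hecke eigenvalues for the Hecke operator $T_p$ acting on spaces of automorphic forms on $PGL_2$ attached to a suitable congruence structure, and the desired bound is precisely the Ramanujan--Petersson conjecture for weight-$2$ holomorphic cusp forms. This was proved by Deligne as a consequence of the Weil conjectures. The hard step is clearly this spectral bound: it relies on deep arithmetic geometry far outside the scope of the present paper. For our purposes, I would treat the theorem of \cite{LPS} as a black box, and the only verification I need to carry out is that the hypotheses ($p$ and $q$ distinct primes, both congruent to $1$ modulo $4$, with $q > p$) match those in their main theorem, and that the explicitness of the construction is preserved throughout --- the generating set can be listed in time polynomial in $p$, and group multiplication in $G$ is polynomial-time in $\log q$.
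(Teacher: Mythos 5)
Your proposal is a correct and accurate summary of the Lubotzky--Phillips--Sarnak construction, which is exactly what the paper invokes: the paper itself gives no proof of this lemma and simply cites \cite{LPS}, so your reconstruction of the vertex set as $PGL_2(\mathbb{F}_q)$ or $PSL_2(\mathbb{F}_q)$ (order $\Theta(q^3)$), the $p+1$ generators from Jacobi's four-square theorem, the symmetry of the generating set via quaternion conjugation, connectedness via strong approximation, and the spectral bound from Deligne's proof of the Ramanujan--Petersson conjecture, all match the cited source. The only caveat worth noting is that, as you say yourself, the eigenvalue bound is not something one reproves here; the content of the lemma in the paper is precisely the black-box citation you identify, and your matching of hypotheses ($q>p$ primes, both $\equiv 1 \bmod 4$) and your check of explicitness are the right things to verify.
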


Unfortunately, there are no known explicit constructions of Ramanujan graphs for every pair of parameters $(n,d)$. However, the previous lemma implies that for every $d$ and $n$ sufficiently large with respect to $d$, there is a $d_0$-regular Ramanujan graph on $n_0$ vertices, where $d_0=\Theta(d)$ and $n_0=(1+o(1))n$, by well known results on the distribution of primes. Alon \cite{alon2021explicit} presents further explicit and strongly explicit constructions of almost Ramanujan graphs for every $n$. For our purposes, given the parameter $s$, we need an infinite family of explicit $(n,d,\lambda)$-graphs, where $d=O_s(1)$ and $\lambda\leq \varepsilon d$ for some fixed $\varepsilon=\varepsilon(s)>0$. The existence of such a family is guaranteed by Lemma \ref{lemma:Ramanujan}. 

One of the key technical results about expander graphs is the \emph{Expander mixing lemma}. Given a subset $U$ of the vertices of a graph $G$, $G[U]$ denotes the subgraph of $G$ induced on $U$. Also, if $U$ and $V$ are disjoint sets of vertices, then $G[U,V]$ is the induced bipartite subgraph of $G$ with parts $U$ and $V$. 

\begin{lemma}[Expander mixing lemma \cite{AC}]
Let $G$ be an $(n,d,\lambda)$-graph. Then for every $U\subset V(G)$,
$$\left|2e(G[U])-\frac{d}{n}|U|^2\right|\leq \lambda |U|.$$
Also, for every pair of disjoint $U,V\subset V(G)$,
$$\left|e(G[U,V])-\frac{d}{n}|U||V|\right|\leq \lambda \sqrt{|U||V|}.$$
\end{lemma}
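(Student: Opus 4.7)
The plan is a standard spectral argument. Let $A$ denote the adjacency matrix of $G$, and let $d = \mu_1 \geq \mu_2 \geq \cdots \geq \mu_n$ be its eigenvalues, so that $|\mu_i| \leq \lambda$ for all $i \geq 2$ by assumption. Since $G$ is $d$-regular, the all-ones vector $\mathbf{1}$ is an eigenvector for $\mu_1 = d$, and one can pick an orthonormal eigenbasis $e_1 = \mathbf{1}/\sqrt{n}, e_2, \ldots, e_n$ of $A$. The key observation is that for any subset $U \subseteq V(G)$ with indicator vector $\mathbf{1}_U$, one has $2e(G[U]) = \mathbf{1}_U^T A \mathbf{1}_U$, and for disjoint $U,V$ one has $e(G[U,V]) = \mathbf{1}_U^T A \mathbf{1}_V$. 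So both estimates reduce to bounding quadratic/bilinear forms in $A$ on indicator vectors.

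For the first inequality, I would decompose $\mathbf{1}_U = \alpha \mathbf{1} + w$, where $\alpha = |U|/n$ and $w \perp \mathbf{1}$. Expanding,
$$\mathbf{1}_U^T A \mathbf{1}_U = \alpha^2 \mathbf{1}^T A \mathbf{1} + w^T A w = \frac{d|U|^2}{n} + w^T A w,$$
since the cross terms vanish by orthogonality and $A \mathbf{1} = d \mathbf{1}$. Writing $w = \sum_{i\geq 2} c_i e_i$, one gets $w^T A w = \sum_{i\geq 2} \mu_i c_i^2$, so $|w^T A w| \leq \lambda \|w\|^2$. Finally, Pythagoras gives $\|w\|^2 = \|\mathbf{1}_U\|^2 - \alpha^2 n = |U| - |U|^2/n \leq |U|$, which yields the claimed bound.

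For the bipartite estimate, the same decomposition applied to both sides works: write $\mathbf{1}_U = \alpha \mathbf{1} + u'$ and $\mathbf{1}_V = \beta \mathbf{1} + v'$ with $u', v' \perp \mathbf{1}$ and $\alpha = |U|/n$, $\beta = |V|/n$. Expanding $\mathbf{1}_U^T A \mathbf{1}_V$ and using $A\mathbf{1} = d\mathbf{1}$ along with $\langle \mathbf{1}, u' \rangle = \langle \mathbf{1}, v' \rangle = 0$ gives
$$\mathbf{1}_U^T A \mathbf{1}_V = \alpha \beta \cdot d n + (u')^T A v' = \frac{d|U||V|}{n} + (u')^T A v'.$$
Then by Cauchy--Schwarz applied to the eigenbasis expansion, $|(u')^T A v'| \leq \lambda \|u'\| \|v'\|$, and Pythagoras gives $\|u'\|^2 \leq |U|$ and $\|v'\|^2 \leq |V|$, so $|(u')^T A v'| \leq \lambda \sqrt{|U||V|}$, as required.

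There is no real obstacle here; the argument is routine once one sets up the eigenvalue decomposition. The only point worth being careful about is the Cauchy--Schwarz step for the bipartite case, where one needs to observe that $A$ acts as a contraction (up to the factor $\lambda$) on the subspace orthogonal to $\mathbf{1}$, so the bilinear form is controlled by the product of norms rather than a sum. Everything else is bookkeeping with the spectral decomposition.
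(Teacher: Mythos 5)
Your proof is correct; it is the standard spectral-decomposition argument for the expander mixing lemma, and all the steps (the decomposition orthogonal to $\mathbf{1}$, the bound $|w^T A w| \leq \lambda\|w\|^2$ on the orthogonal complement, the Cauchy--Schwarz step for the bipartite form, and the Pythagorean bound $\|w\|^2 \leq |U|$) are carried out correctly. The paper states this lemma without proof, as it is a classical result, so there is no in-paper argument to compare against; your write-up is exactly the proof one would expect to supply.
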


\noindent
One of the simple consequences of the Expander mixing lemma is the following statement.

\begin{lemma}\label{lemma:component}
Let $G$ be an $(n,d,\lambda)$-graph, and let $H$ be a subgraph of $G$ with at least $\lambda n$ edges. Then $H$ has a connected component of size at least $e(H)/d$.
\end{lemma}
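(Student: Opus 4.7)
The plan is to bound the number of edges of $H$ inside each connected component using the Expander mixing lemma, and then sum these bounds to force some component to have many vertices.

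First I would let $C_1, \dots, C_t$ denote the connected components of $H$, writing $U_i = V(C_i)$, and observe that since every edge of $H$ lies inside one of the $U_i$,
\[
e(H) = \sum_{i=1}^{t} e(H[U_i]) \leq \sum_{i=1}^{t} e(G[U_i]).
\]
Applying the Expander mixing lemma to each $U_i$ gives
\[
e(G[U_i]) \leq \frac{d}{2n}|U_i|^2 + \frac{\lambda}{2}|U_i|.
\]
Summing over $i$ and letting $m = \max_i |U_i|$, I would use the crude but effective inequality $\sum_i |U_i|^2 \leq m \sum_i |U_i| \leq mn$ together with $\sum_i |U_i| \leq n$ to obtain
\[
e(H) \leq \frac{d}{2n}\cdot mn + \frac{\lambda}{2}\cdot n = \frac{dm + \lambda n}{2}.
\]

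Finally, I would invoke the hypothesis $e(H) \geq \lambda n$ to absorb the error term: rearranging the previous inequality gives $dm \geq 2e(H) - \lambda n \geq e(H)$, hence $m \geq e(H)/d$, which is the desired bound on the size of the largest connected component. There is no real obstacle here; the only subtle choice is using $\sum |U_i|^2 \leq m\sum |U_i|$ rather than a term-by-term maximization, since that is what cleanly converts the quadratic bound from the mixing lemma into a linear bound in $m$.
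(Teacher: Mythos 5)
Your proof is correct and is essentially identical to the paper's: both decompose $H$ into connected components, bound $e(G[U_i])$ via the Expander mixing lemma, use $|U_i|^2 \leq m|U_i|$ and $\sum_i |U_i| \leq n$ to get $e(H) \leq (dm+\lambda n)/2$, and then absorb the $\lambda n$ term using the hypothesis $e(H) \geq \lambda n$. No differences worth noting.
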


\begin{proof}
Let $U_1,\dots,U_s$ be the vertex sets of connected components of $H$, and let $b=\max_{i\in [s]} |U_i|$. By the first part of the Expander mixing lemma, we have 
$$e(G[U_i])\leq \frac{d}{2n}|U_i|^2+\frac{\lambda}{2}|U_i|\leq \left(\frac{db}{2n}+\frac{\lambda}{2}\right)|U_i|$$ for $i=1,\dots,s$. Therefore,
$$e(H)\leq \sum_{i=1}^s e(G[U_i])\leq \sum_{i=1}^s\left(\frac{db}{2n}+\frac{\lambda}{2}\right)|U_i|= \frac{db+\lambda n}{2}.$$
Comparing the left and right hand side, we get $$b\geq \frac{2e(H)-\lambda n}{d}\geq \frac{e(H)}{d}.$$
\end{proof}

\noindent
Another useful corollary is the following.

\begin{lemma}\label{lemma:large_component}
    Let $G$ be an $(n,d,\lambda)$-graph, and let $U\subset V(G)$. Then $G[U]$ has a connected component of size at least $|U|-\frac{2\lambda n}{d}$.
\end{lemma}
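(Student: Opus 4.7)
The plan is to find a bipartition $U = A \sqcup B$ of the vertices with no $G$-edges between $A$ and $B$, that is as balanced as possible, and then apply the second part of the Expander Mixing Lemma to bound $|A||B|$. The naive choice $A = $ (largest component), $B = U \setminus A$ only yields $b(u-b) \le (\lambda n/d)^2$, which is far too weak when the largest component is small (e.g., when $U$ is an independent set, $b=1$). Fixing this imbalance is the main obstacle, and the remedy is a greedy bipartition of the connected components.

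More concretely, let $C_1,\dots,C_k$ denote the vertex sets of the connected components of $G[U]$, set $b = \max_i |C_i|$, and write $u=|U|$. I assign the $C_i$'s one by one to two bins $\mathcal A, \mathcal B$, each time placing the current component into the bin of smaller current total; setting $A=\bigcup_{C\in\mathcal A}C$ and $B=\bigcup_{C\in\mathcal B}C$, the greedy choice guarantees $\bigl||A|-|B|\bigr| \le b$, hence $|A|,|B| \in [(u-b)/2,\,(u+b)/2]$. Because $\mathcal A$ and $\mathcal B$ are unions of distinct components of $G[U]$, there are no $G$-edges between $A$ and $B$.

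Applying the Expander Mixing Lemma to $(A,B)$ then gives
\[
\frac{d}{n}|A||B| \;\le\; \lambda\sqrt{|A||B|},
\qquad\text{so}\qquad
|A||B| \;\le\; \frac{\lambda^2 n^2}{d^2}.
\]
On the other hand, the constraint $|A|+|B|=u$ with $|A|,|B|\in[(u-b)/2,(u+b)/2]$ forces $|A||B| \ge (u-b)(u+b)/4$, so
\[
(u-b)(u+b) \;\le\; \frac{4\lambda^2 n^2}{d^2}.
\]
To finish, I argue by contradiction: if $u - b > 2\lambda n/d$, then $u+b \le \frac{4\lambda^2 n^2/d^2}{u-b} < 2\lambda n/d$, contradicting $u+b \ge u \ge u-b > 2\lambda n/d$. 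Therefore $u - b \le 2\lambda n/d$, which is exactly the claimed bound $b \ge |U| - 2\lambda n/d$.
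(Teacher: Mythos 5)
Your proof is correct, and it takes a genuinely different route from the paper's. The paper applies the \emph{first} part of the Expander Mixing Lemma twice: once to each component $U_i$, yielding $e(G[U_i]) \le \left(\frac{db}{2n} + \frac{\lambda}{2}\right)|U_i|$, and once to $U$ itself to get $e(G[U]) \ge \frac{d}{2n}|U|^2 - \frac{\lambda}{2}|U|$; summing the former over all components and comparing with the latter immediately gives $b \ge |U| - 2\lambda n/d$. You instead invoke the \emph{bipartite} (second) part of the lemma: you greedily pack the components into two bins $A, B$ whose sizes differ by at most $b$, observe that $e(G[A,B])=0$, and deduce $|A||B| \le \lambda^2 n^2/d^2$, which together with $|A||B| \ge (u-b)(u+b)/4$ forces $u - b \le 2\lambda n/d$. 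Both arguments are short and elementary; the paper's avoids the balancing step entirely and is slightly more direct, while yours gives a clean geometric picture (a zero-edge cut of $U$ must be badly unbalanced in an expander). One small point worth making explicit in your write-up: the greedy invariant $\bigl||A|-|B|\bigr|\le b$ follows by the induction that after placing each component in the smaller bin, the new gap is $\bigl|d_{\mathrm{old}} - |C_i|\bigr| \le \max(d_{\mathrm{old}}, |C_i|) \le b$; you state the conclusion but not this short verification.
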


\begin{proof}
Let $U_1,\dots,U_{s}$ be the vertex sets of connected components of $G[U]$, and let $b=\max_{i\in [s]} |U_i|$. Then by the Expander mixing lemma, $$e(G[U_i])\leq \frac{d}{2n}|U_i|^2+\frac{\lambda}{2}|U_i|\leq \left(\frac{db}{2n}+\frac{\lambda}{2}\right)|U_i|.$$
On the other hand, $e(G[U])\geq \frac{d}{2n}|U|^2-\frac{\lambda}{2} |U|$. Hence,
$$\frac{d}{2n}|U|^2-\frac{\lambda}{2} |U|\leq \sum_{i=1}^{s}\left(\frac{db}{2n}+\frac{\lambda}{2}\right)|U_i|=\left(\frac{db}{2n}+\frac{\lambda}{2}\right)|U|.$$
From this, we get
$$b\geq |U|-\frac{2\lambda n}{d}.$$
\end{proof}

\noindent
Finally, we need the following technical result.

\begin{lemma}\label{lemma:star}
Let $G$ be an $(n,d,\lambda)$-graph, and let $U_0,\dots,U_{t}\subset V(G)$ be disjoint sets such that $|U_0|> t\lambda n /d$, and $|U_i|\geq \lambda n/d$ for $i=1,\dots,t$. Then there is a vertex $x\in U_0$ that sends an edge to every $U_i$, $i=1,\dots,t$.
\end{lemma}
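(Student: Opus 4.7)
The plan is to show, for each $i \in \{1,\dots,t\}$, that only few vertices of $U_0$ fail to have a neighbor in $U_i$, and then rule them all out by a union bound using the assumption $|U_0| > t\lambda n/d$.

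Concretely, for each $i\in\{1,\dots,t\}$ let $A_i\subset U_0$ be the set of vertices that have no neighbor in $U_i$. Since $A_i$ and $U_i$ are disjoint (as the $U_j$'s are disjoint) and there are no edges between them, the second part of the Expander mixing lemma applied to the pair $(A_i, U_i)$ yields
$$\frac{d}{n}|A_i||U_i|\leq \lambda\sqrt{|A_i||U_i|},$$
which, after squaring and cancelling, gives $|A_i||U_i|\leq (\lambda n/d)^2$. Using the hypothesis $|U_i|\geq \lambda n/d$, this simplifies to $|A_i|\leq \lambda n/d$.

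Now the union bound gives $\left|\bigcup_{i=1}^t A_i\right|\leq t\lambda n/d < |U_0|$, where the strict inequality comes from the assumption on $|U_0|$. Therefore, there exists a vertex $x\in U_0\setminus\bigcup_{i=1}^t A_i$, and by the definition of the $A_i$'s, such a vertex has at least one neighbor in every $U_i$ for $i=1,\dots,t$, as desired.

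There is no real obstacle here: the only mild subtlety is verifying the disjointness of $A_i$ and $U_i$ so that the bipartite form of the Expander mixing lemma applies directly. This holds because $A_i\subseteq U_0$ and the $U_j$'s were assumed to be pairwise disjoint.
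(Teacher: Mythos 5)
Your proof is correct and follows essentially the same route as the paper: both arguments use the bipartite form of the Expander mixing lemma to bound the number of vertices of $U_0$ with no neighbor in a given $U_i$ by $\lambda n/d$, and then conclude by a union bound (the paper phrases it as intersecting the complements $W_i$, which is the same thing). Your version is slightly more explicit in deriving $|A_i|\leq \lambda n/d$ from the mixing inequality, which in fact handles the boundary case $|U_i|=\lambda n/d$ a bit more carefully than the paper's shorthand ``any two sets of size more than $b$ are joined by an edge.''
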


\begin{proof}
The second part of the Expander mixing lemma implies that there is an edge between any two sets of size more than $b=\lambda n/d$. Hence, if $W_i\subset U_0$ is the set of vertices that send an edge to $U_i$, then $|W_i|\geq |U_0|-b$. But then $|\bigcap_{i=1}^t W_i|\geq |U_0|-bt>0$. Any vertex $x\in \bigcap_{i=1}^t W_i$ suffices.
\end{proof}

\subsection{Coding theory}\label{sec:coding_background}
In this section we define the basic notions from coding theory that we will need later, and prove the relation between $s$-minimal codes and strong $s$-blocking sets. 

An $[n, k, d]_q$ code is a $k$-dimensional vector subspace of $\mathbb{F}_q^n$ such that every non-zero vector in this subspace has Hamming weight at least $d$, that is, it has at least $d$ non-zero coordinates, and there is a vector with Hamming weight equal to $d$. 
The parameter $n$ is the length of the code, $k$ its dimension, and $d$ is the minimum distance of the code. 
There are two matrices associated with a $k$-dimensional code $C<\mathbb{F}_q^n$. A \emph{generator matrix} of $C$ is a $k\times n$ matrix, whose rows are the vectors of a basis of $C$. A \emph{check matrix} of $C$ is an $(n-k)\times n$ matrix, whose null-space is $C$.

The \emph{dual code} of $C$, denoted by $C^{\perp}$, is the code generated by the rows of the check matrix, or equivalently, the subspace orthogonal to $C$. It is easy to observe that a check matrix of $C$ is a generator matrix of $C^{\perp}$, and vice versa.

We now define minimal codes and for self-containment prove their equivalence with strong blocking sets, which was observed by Alfarano, Borello,  Neri, and  Ravagnani \cite{ABNR} for $s = 1$ but the proof of the general case is very similar. 
The support of a vector subspace $X$ of $\mathbb{F}_q^n$, denoted by $\mathrm{supp}(X)$, is the set of coordinate positions $i$ such that there is a vector $x \in X$ with $x_i \neq 0$.

\begin{definition}[Minimal codes]
An $[n, k, d]_q$ code $C$ is called $s$-minimal if for any two distinct $s$-dimensional subspaces $X, Y$ of $C$, $\mathrm{supp}(X) \not\subset \mathrm{supp}(Y)$ and $\mathrm{supp}(Y) \not\subset \mathrm{supp}(X)$.
\end{definition}

In other words, the supports of $s$-dimensional subspaces in an $s$-minimal code must form an antichain in the boolean poset $2^{[n]}$. 

\begin{theorem}
Let $G$ be a $k \times n$ matrix over $\mathbb{F}_q$ such that all of its columns are non-zero and pairwise linearly independent. 
Then the row-space of $G$ is an $s$-minimal code if and only if the columns of $G$ form a strong $s$-blocking set. 
\end{theorem}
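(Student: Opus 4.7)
The plan is to set up a dimension-preserving dictionary between $s$-dimensional subspaces of the code $C$ and codimension-$s$ subspaces of $\mathbb{F}_q^{k}$, under which support-containment on the code side translates into a concrete incidence relation with the column set $V := \{v_1, \ldots, v_n\}$. Concretely, for $x \in \mathbb{F}_q^{k}$ the codeword $xG$ has $i$-th coordinate $\langle x, v_i\rangle$, so for any $s$-dimensional subspace $W \leq \mathbb{F}_q^{k}$, the complement of $\mathrm{supp}(WG)$ is exactly $\{i : v_i \in W^{\perp}\}$. A preliminary observation is that $V$ is forced to span $\mathbb{F}_q^{k}$ in either direction of the equivalence, so $G$ has rank $k$ and $W \mapsto WG$ is a bijection between $s$-dimensional subspaces of $\mathbb{F}_q^{k}$ and $s$-dimensional subspaces of $C$.

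With this dictionary in hand, a containment $\mathrm{supp}(W_1 G) \subseteq \mathrm{supp}(W_2 G)$ between distinct $s$-dimensional $W_1, W_2$ becomes $L_2 \cap V \subseteq L_1 \cap V \subseteq L_1$, where $L_j := W_j^{\perp}$ are codimension-$s$ subspaces. Thus $C$ fails to be $s$-minimal if and only if there exist distinct codimension-$s$ subspaces $L_1 \neq L_2$ with $L_2 \cap V \subseteq L_1$, and the theorem reduces to showing that this last condition is equivalent to $V$ failing to be a strong $s$-blocking set.

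For one direction, if $V$ is a strong $s$-blocking set and $L_2 \cap V \subseteq L_1$, then $L_2 = \langle L_2 \cap V \rangle \subseteq L_1$, which forces $L_1 = L_2$ by equality of dimension, so $C$ is $s$-minimal. Conversely, if $V$ is not strong $s$-blocking, pick $L_2$ with $L_0 := \langle L_2 \cap V\rangle$ strictly contained in $L_2$; since $\dim L_0 \leq k-s-1$, the quotient $\mathbb{F}_q^{k}/L_0$ has dimension at least $s+1$, so $L_0$ is contained in more than one codimension-$s$ subspace, and any $L_1 \neq L_2$ through $L_0$ gives $L_2 \cap V \subseteq L_0 \subseteq L_1$, violating $s$-minimality. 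There is no genuine obstacle in this argument; it is essentially a linear-algebra translation, with the only mild points being the rank-$k$ remark above and the trivial counting step in the converse.
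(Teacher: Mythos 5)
Your proof takes essentially the same route as the paper's: you identify $s$-dimensional subspaces $W \le \mathbb{F}_q^k$ with $s$-dimensional subspaces $WG$ of $C$ and with codimension-$s$ subspaces $L = W^\perp$, translate support-containment into $L_2 \cap V \subseteq L_1$, and then argue both directions exactly as the paper does via $\langle L_2 \cap V\rangle$ having codimension at least $s+1$ when blocking fails. One caveat: your preliminary remark that $s$-minimality \emph{forces} $V$ to span $\mathbb{F}_q^k$ is not actually true --- if $\dim\langle V\rangle < s$, or more generally $\dim\langle V\rangle \le s$, the code has too few $s$-dimensional subspaces and is vacuously $s$-minimal while $V$ is certainly not a strong $s$-blocking set, so the rank-$k$ assumption on $G$ must be taken as a hypothesis rather than derived; the paper's proof tacitly makes the same assumption (its map $\phi$ is only a bijection when $\operatorname{rank}(G)=k$), so this is a shared gap in the statement rather than a defect peculiar to your argument.
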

\begin{proof}
   Let $C<\mathbb{F}_q^n$ be the row-space of $G$. An $s$-dimensional subspace $X$ of $C$ can be uniquely identified with the row space of $M G$, where $M$ is a full rank $s \times k$ matrix. 
    Moreover, the null-space of $M$ gives us a codimension $s$-subspace in $\mathbb{F}_q^k$ which contains the $i$-th column of $G$ if and only if the $i$-th coordinate is equal to $0$ in all codewords of $X$. 
    Let $\phi$ be the bijection from the codimension-$s$ subspaces of $\mathbb{F}_q^k$ to the $s$-dimensional subspaces of $C$ that maps the null space of $M$ to the row space of $MG$. 
    
    The columns do not form a strong $s$-blocking set in $\mathbb{F}_q^k$ if and only if there is a codimension-$s$ subspace $H$ such that the set of columns of $G$ that lie in $H$ are all contained in a subspace $T$ of one dimension less.
    Thus we can find a distinct codimension-$s$ subspace $H'<\mathbb{F}_q^k$ such that $H \cap H' = T$ and all columns of $G$ that are contained in $H$ are also contained in $H'$. 
    This happens if and only if $[n] \setminus \mathrm{supp}(\phi(H)) \subset [n] \setminus \mathrm{supp}(\phi(H'))$, which is equivalent to $C$ not being $s$-minimal. 
\end{proof}

\subsection{Points in general position}

Our constructions require a supply of $n=O_s(k)$ vectors  $W\subset \mathbb{F}_q^k$ with the following two properties: for some fixed $\varepsilon=\varepsilon(s)>0$, every set of $\varepsilon n$ elements of $W$ span the whole space, and every $s+1$ elements of $W$ are linearly independent. Explicit constructions of such sets can be found in coding theory, as follows. 

We need the following well known properties of generator and check matrices of a linear code. If $C$ is an $[n,k,d]_q$-code, and $H$ is a check-matrix of $C$, then any $d-1$ columns of $H$ are linearly independent. Moreover, if $G$ is a generator matrix of $C$, then any $n-d+1$ columns of $G$ span the whole space.
Therefore, the required set can be constructed by using the columns of the generator matrix of an $[n, k, d]_q$ code $C$ where $n = O_{s}(k)$, $d \geq (1 - \varepsilon) n + 1$ and the minimum distance of the dual code $C^\perp$ satisfies $d^\perp \geq s + 2$.

\begin{definition}[Rate and relative distance]
Let $\{n_i \}_{i \geq 1}$ be an increasing sequence of positive integers and suppose that there exist sequences $\{k_i\}_{i \geq 1}$ and $\{d_i\}_{i \geq 1}$ such that
for all $i \geq 1$ there is an $[n_i, k_i, d_i]_q$ code $\mathcal{C}_i$. 
Then $\{\mathcal{C}_i\}_{i \geq 1}$ is called an $(R,\delta)_q$-family of codes, where 
the rate $R$ of this family is defined as 
\[R \coloneqq \liminf_{i \rightarrow \infty} \frac{k_i}{n_i},\]
and the relative distance $\delta$ is defined as 
\[\delta \coloneqq \liminf_{i \rightarrow \infty} \frac{d_i}{n_i}.\]
\end{definition}

The main problem in coding theory is to study the trade-off between the rate and the relative distance of codes. 
A family of codes for which $R > 0$ and $\delta > 0$, is known as an \textit{asymptotically good code}. 
The Gilbert-Varshamov bound, which follows from a probablistic argument, shows the existence of such codes for every $\delta \in [0, 1 - 1/q)$ and $R = 1 - H_q(\delta)$, where
\[H_q(x) \coloneqq x \log_q(q - 1) - x \log_q(x)  - (1 - x) \log_q(1 - x),\]
is the $q$-ary entropy function.
The first \textit{explicit construction} of asymptotically good codes was given by  Justesen \cite{justesen1972class}, who showed that for every $0 < R < 1/2$, there is an explicit family of codes with rate $R$ and relative distance $\delta \geq (1 - 2R) H_q^{-1}\left(\frac{1}{2}\right)$. 
Improving the values of the rate $R$ and the relative distance $\delta$ has been an active area of research in coding theory since the 1970s.
One of the most significant developments was the use of modular curves to show that, for every square $q \geq 49$, there are explicit constructions of linear codes over $\mathbb{F}_q$ that beat the Gilbert-Varshamov  bound (see~\cite{couvreur2021algebraic, tsfasman2013algebraic} for some recent surveys on these constructions).
Another useful tool for constructing explicit codes is expander graphs \cite{sipser1996expander}.

We first note that these well-known explicit constructions of asymptotically good codes \cite{alon1992construction, justesen1972class, tsfasman2013algebraic, tsfasman1982modular} and the discussion above implies the following construction where we do not have any restriction on the dual distance. 
This will suffice for small prime powers. 

\begin{lemma}\label{lemma:constr2}
There exists $\delta>0$ and $C>0$ such that the following holds for every prime power $q$, and sufficiently large $k$. There exists an explicit construction of a set $W\subset \mathbb{F}_q^k$ of size at most $Ck$ such that any $(1-\delta)|W|$ elements of $W$ span $\mathbb{F}_q^k$.
\end{lemma}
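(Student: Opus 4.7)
The plan is to realize $W$ as the set of columns of a generator matrix of an explicit asymptotically good $\mathbb{F}_q$-linear code and convert the minimum-distance property of the code into the spanning property of $W$. The key translation, recalled in the paragraph preceding the lemma, is that any $n - d + 1$ columns of a generator matrix of an $[n,k,d]_q$-code span $\mathbb{F}_q^k$: if some $n - d + 1$ columns lay in a hyperplane with normal $v \neq 0$, then $v^{\top} G$ would be a nonzero codeword of weight at most $d - 1$, contradicting the minimum distance.

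First I would fix, by appealing to the classical explicit asymptotically good constructions cited in the discussion above (e.g.\ Justesen \cite{justesen1972class} and its generalizations to arbitrary alphabets \cite{alon1992construction, tsfasman1982modular, tsfasman2013algebraic}), absolute constants $R_0, \delta_0 \in (0,1)$, independent of $q$, such that for every prime power $q$ and every sufficiently large $N$ the family contains an explicit $\mathbb{F}_q$-linear code of length $N$ with rate at least $R_0$ and relative distance at least $\delta_0$. Given the target dimension $k$, I would take a code $\mathcal{C}_0$ from the family whose dimension $k_0$ satisfies $k \leq k_0 = k + o(k)$, and then pass to an arbitrary $k$-dimensional linear subcode $\mathcal{C} \leq \mathcal{C}_0$; taking a subcode can only increase the minimum distance, so $\mathcal{C}$ is an $[n, k, d]_q$-code with $n \leq k/R_0 + o(k)$ and $d \geq \delta_0 n$.

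Let $G$ be a $k \times n$ generator matrix of $\mathcal{C}$, and set $W$ to be the multiset of columns of $G$, so $|W| = n \leq Ck$ for $C := 2/R_0$ and $k$ large. Choose $\delta := \delta_0/2$. Any subset $S \subseteq W$ of size at least $(1-\delta)|W| = (1 - \delta_0/2)\,n$ satisfies $|S| \geq n - d + 1$ for $n$ sufficiently large, hence $S$ contains an $(n-d+1)$-element subset which spans $\mathbb{F}_q^k$ by the opening observation; \emph{a fortiori}, $S$ spans $\mathbb{F}_q^k$, which is precisely the conclusion of the lemma.

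The only real obstacle is to ensure that the constants $R_0, \delta_0$ in the above can be taken independent of the prime power $q$, and that every sufficiently large target dimension $k$ is realized exactly as the dimension of a code in the family. The former is guaranteed by the uniformity of the cited classical constructions, and the latter is handled by the subcoding step; neither issue is substantive, which is why the lemma is a direct consequence of the preceding discussion.
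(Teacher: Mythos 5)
Your proposal is correct and follows exactly the route the paper intends: the paper states Lemma~\ref{lemma:constr2} without a formal proof, as an immediate consequence of the preceding discussion (columns of a generator matrix of an explicit asymptotically good $[n,k,d]_q$ code, together with the fact that any $n-d+1$ such columns span $\mathbb{F}_q^k$). Your additional care about realizing every dimension $k$ via a subcode and about the uniformity of the rate/distance constants in $q$ only makes explicit what the paper glosses over.
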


For larger prime powers, we need the following stronger result. 

\begin{lemma}\label{lemma:constr1}
For every integer $s$ and $\varepsilon_0>0$, there exists $C$ such that the following holds. 
Let $q$ be a prime power and $k$ be an integer, both sufficiently large with respect to $s$ and $\varepsilon_0$. Then there exists an explicit construction of a set $W\subset \mathbb{F}_q^k$ of size at most $Ck$ such that any $s+1$ elements of $W$ are linearly independent, and any $\varepsilon_0 |W|$ elements of $W$ span $\mathbb{F}_q^k$. In case $q$ is a square or $k<\varepsilon_0 q$, $W$ is strongly explicit, otherwise $W$ can be constructed in $k^{O_{s,\varepsilon}(1)}$ time.
\end{lemma}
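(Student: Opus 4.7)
The plan is to produce $W$ as the set of columns of a $k \times n$ generator matrix of an explicit $[n, k, d]_q$ linear code $C$ with $n \le Ck$, minimum distance $d \ge (1-\varepsilon_0) n + 1$, and dual minimum distance $d^\perp \ge s + 2$. The translation between these three code inequalities and the two properties demanded of $W$ is already spelled out just before the lemma: any $n - d + 1$ columns of a generator matrix span $\mathbb{F}_q^k$ (so $\varepsilon_0 n \ge n - d + 1$ gives the spanning property), and any $d^\perp - 1$ columns are linearly independent (so $d^\perp \ge s + 2$ gives the $(s+1)$-wise linear independence).

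For $q$ sufficiently large with respect to $\varepsilon_0$, such codes are supplied by algebraic-geometry Goppa codes, in which the dimension, primal minimum distance and dual minimum distance can all be controlled through the degree of the defining divisor. Concretely, fix an explicit smooth projective curve $X/\mathbb{F}_q$ from the Garcia--Stichtenoth tower, with $N$ rational points $P_1, \ldots, P_N$ and genus $g$ satisfying $g/N \le 1/(\sqrt{q} - 1)$. For a divisor $G$ of degree $m$ disjoint from the $P_i$'s and with $m > 2g - 1$, the functional Goppa code $C_L(D, G)$ (where $D = P_1 + \cdots + P_N$) has dimension $m - g + 1$, primal minimum distance $\ge N - m$, and its dual (the residue code $C_\Omega(D, G)$) has minimum distance $\ge m - 2g + 2$. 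Picking $m$ in the non-empty range
\[ 2g + s \;\le\; m \;\le\; \varepsilon_0 N \]
then simultaneously forces $d \ge (1 - \varepsilon_0) N$ and $d^\perp \ge s + 2$, while the dimension equals $m - g + 1$.

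The main technical task is matching the exact target dimension $k$ with length at most $Ck$ for some $C$ depending only on $s$ and $\varepsilon_0$: because the tower supplies only a specific infinite sequence of pairs $(N, g)$, I would handle this by first choosing a tower level with $g \le k - s - 1$, then tuning $m$ to $k + g - 1$ to land on dimension exactly $k$ (or starting with a slightly larger dimension and shortening the code, which preserves the primal minimum distance and drops $d^\perp$ by at most one per coordinate removed, a loss that can be absorbed by initially enlarging the slack on $m$). In the regime where $q$ is very large compared to $k$, it is often simpler to fall back on Reed--Solomon codes of length $\le q$, which have parameters $[n, k, n - k + 1]_q$ and dual distance $k + 1$, and which fit the requirements directly once $k \ge s + 1$. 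The hardest step is the explicit computability of the whole package, which relies on the classical but nontrivial fact that the Garcia--Stichtenoth tower, its rational places, and Riemann--Roch bases of $\mathcal{L}(G)$ can all be produced in polynomial time. A minor caveat is that the Garcia--Stichtenoth tower attains $g/N \to 1/(\sqrt{q} - 1)$ only when $q$ is a square; for other prime powers $q$ sufficiently large with respect to $\varepsilon_0$, one instead uses an analogous explicit family (for instance the Bassa--Beelen--Garcia--Stichtenoth tower) for which $g/N \to 0$.
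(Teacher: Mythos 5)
Your proposal is correct and follows essentially the same route as the paper: both reduce the two required properties of $W$ to the primal and dual minimum distances of a generator matrix, and both obtain the needed codes from algebraic-geometry (Goppa) codes on Garcia--Stichtenoth-type towers, using $g/N \le 1/A(q)$ to make the primal relative distance exceed $1-\varepsilon_0$ while keeping the dual distance at least $s+2$. The only difference is presentational — you work directly with the divisor degree $m$ and the designed distances of $C_L(D,G)$ and $C_\Omega(D,G)$, whereas the paper invokes the asymptotic $(R,\delta)_q$-family together with the identity $\delta^\perp = \delta + 2R - 1$ — and your explicit attention to hitting the exact dimension $k$ and to non-square $q$ is, if anything, more careful than the paper's treatment.
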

\begin{proof}
    In case $k<\varepsilon_0 q$, Reed-Solomon codes suffice. In particular, take $S\subset \mathbb{F}_q$ of size $k/\varepsilon_0$, and let the elements of $W$ be $(1,\alpha,\dots,\alpha^{k-1})$ for $\alpha\in S$. It is easy to check that all conditions are satisfied with $C=1/\varepsilon_0$.
    
    When $q$ is a square, for every $R\in (0,1)$, asymptotic Goppa codes provide an $(R,\delta)_q$-family of codes for some $\delta$ satisfying $R+\delta \geq  1-1/A(q)$, where $A(q)=\sqrt{q}-1$ is Ihara's constant (see Section 2.9 in \cite{hoholdt1998algebraic}). 
    From Theorem 2.69 and 2.71 in \cite{hoholdt1998algebraic} it can be derived that the relative distance of the dual code satisfies 
$\delta^\perp = \delta + 2R - 1 \geq R - 1/A(q)$.
Therefore the columns of the generator matrix of an $[n, k, d]_q$ code from this $(R,\delta)_q$-family give rise to non-zero vectors 
$v_1, \dots, v_n$ in $\mathbb{F}_q^k$, such that any $r$ of them are linearly independent and any $n - d + 1$ of them span the whole space, with 
\[\lim_{n \rightarrow \infty} \frac{k}{n} = R,\]
\[\lim_{n \rightarrow \infty} \frac{n - d}{n} \leq R + 1/A(q),\]
\[\lim_{n \rightarrow \infty} \frac{r}{n} \geq R - 1/A(q).\] 
For our construction we pick $q$ sufficiently large such that $1/A(q)<\varepsilon_0/4$, and set $R=2/A(q)$. Then $R+1/A(q)<3\varepsilon_0/4$ and $R-1/A(q)\geq 1/A(q)$, so if $k$ is sufficiently large, there is a set of vectors $W\subset \mathbb{F}_q^k$ with the desired properties. 

We provide the construction for general $q$ in the Appendix.
\end{proof}

\begin{rmk}
    An alternative construction can be obtained via asymptotically-good self-dual codes \cite[Theorem 1.6]{Stichtenoth06}, which also works for large enough $q$. 
    For $q = 2$ there is another construction of self-dual codes given in \cite{ALT}, motivated by quantum error correction, that we can use. 
\end{rmk}

\section{The hypergraph of linear combinations}\label{sect:trees}

Given a set $U\subset \mathbb{F}_q^k$, we denote by $\mbox{span}(U)$ or $\langle U\rangle$ the linear subspace spanned by $U$, and $\dim(U)$ denotes the dimension of $\mbox{span}(U)$. Say that a linear combination $a_1 v_1+\dots+a_rv_r$ of the set of vectors $\{v_1,\dots,v_r\}$ is \emph{proper} if none of the coefficients $a_i$ is equal to 0.

Given a set of vectors $W\subset \mathbb{F}_q^k$ and a target set $R\subset \mathbb{F}_q^k$, we associate the pair $(W,R)$ with a hypergraph as follows.

\begin{definition}[Proper linear combination hypergraph]
    For $W,R\subset \mathbb{F}_q^k$, let $H_{W\rightarrow R}$ be the hypergraph whose vertex set is $W$, and a subset $X\subset W$ forms an edge if there is a proper linear combination of the elements of $X$  contained in $R$. We refer to this hypergraph as the \emph{proper linear combination hypergraph} of $(W,R)$. 
\end{definition} 

\begin{definition}[Proper span]
    Given a hypergraph $H$ on vertex set $W\subset \mathbb{F}_q^k$, the \emph{proper span} of $H$, denoted by $\mbox{pspan}(H)$, is the set of all $v\in \mathbb{F}_q^k$ such that $v$ can be written as a proper linear combination of the elements of an edge of $H$.
\end{definition}

With these definitions in hand, our aim in this section is to prove that for certain tree-like hypergraphs $H$, if $H$ is a subhypergraph of $H_{W\rightarrow R}$, then $\mbox{pspan}(H)\cap R$ spans a subspace of large dimension in $R$. To this end, we first define our notion of tree-like hypergraphs.

\begin{definition}[$s$-tree-like hypergraph]
    Given a positive integer $s$, a hypergraph is \emph{$s$-bounded} if the size of every edge is  at most $s$. A hypergraph $H$ is \emph{$s$-tree-like} if it is $s$-bounded and there is an ordering $v_1,\dots,v_n$ of the vertices of $H$ such that the vertex $v_i$ in the subhypergraph of $H$ induced on $\{v_i,\dots,v_n\}$ has degree $1$ for $i=1,\dots,n-s+1$.
\end{definition}

 The main lemma of this section is the following.

\begin{lemma}\label{lemma:tree-like}
Let $W,R\subset \mathbb{F}_q^k$. If $H_{W\rightarrow R}$ contains an $s$-tree-like subhypergraph $H$ on vertex set $U$, then 
$$\dim(\mathrm{pspan}(H)\cap R)\geq \dim(U)-s+1.$$
\end{lemma}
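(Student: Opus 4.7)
The plan is to exploit the tree-like ordering to produce an explicit large linearly independent family inside $\mathrm{pspan}(H)\cap R$. Let $v_1,\dots,v_n$ be the ordering of $U=V(H)$ guaranteed by the $s$-tree-like property, so that for every $i\in\{1,\dots,n-s+1\}$, the vertex $v_i$ lies in exactly one edge $e_i$ of the subhypergraph of $H$ induced on $\{v_i,\dots,v_n\}$. Since $H\subseteq H_{W\to R}$, each such edge $e_i$ comes with a proper linear combination $r_i=\sum_{v\in e_i}a_{i,v}\,v\in R$, where $a_{i,v_i}\neq 0$ and $e_i\subseteq\{v_i,\dots,v_n\}$. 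These $r_i$ all lie in $\mathrm{pspan}(H)\cap R$, so it suffices to show that many of them are linearly independent.

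The key observation is to track where the nested subspaces $V_i:=\mathrm{span}(v_i,v_{i+1},\dots,v_n)$ actually drop in dimension. Set $V_{n+1}:=\{0\}$ and let
\[
I:=\{\,i\in\{1,\dots,n\}\;:\;v_i\notin V_{i+1}\,\}.
\]
Because $\dim V_i=\dim V_{i+1}+1$ exactly when $i\in I$ and $\dim V_i=\dim V_{i+1}$ otherwise, telescoping gives $|I|=\dim V_1=\dim(U)$. Now for each $i\in I\cap\{1,\dots,n-s+1\}$, the defining equation of $r_i$ can be rewritten as $r_i=a_{i,v_i}v_i+w_i$ with $w_i\in V_{i+1}$ and $a_{i,v_i}\neq 0$, so $r_i\in V_i\setminus V_{i+1}$.

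Next I would verify that $\{r_i : i\in I\cap\{1,\dots,n-s+1\}\}$ is linearly independent by a standard minimal-index argument: suppose $\sum c_i r_i=0$ with some $c_i\neq 0$, and let $i_0$ be the smallest index with $c_{i_0}\neq 0$. For $i>i_0$ we have $r_i\in V_i\subseteq V_{i_0+1}$, so the identity forces $c_{i_0}a_{i_0,v_{i_0}}v_{i_0}\in V_{i_0+1}$, contradicting $i_0\in I$. Finally, since at most $s-1$ indices of $I$ can lie in $\{n-s+2,\dots,n\}$, we conclude
\[
\dim(\mathrm{pspan}(H)\cap R)\;\geq\;|I\cap\{1,\dots,n-s+1\}|\;\geq\;\dim(U)-(s-1),
\]
as required.

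I do not expect any real obstacle here: the tree-like ordering directly produces the edges $e_i$ and hence the vectors $r_i$, the bookkeeping of where dimension drops in the filtration $V_i\supseteq V_{i+1}$ is elementary, and linear independence is a one-line triangular argument. The only subtlety worth being careful about is that the proper-combination condition is used precisely to ensure $a_{i,v_i}\neq 0$, which is what makes $r_i$ genuinely escape $V_{i+1}$; without properness, $r_i$ could collapse into $V_{i+1}$ and the argument would break.
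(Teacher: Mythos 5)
Your proof is correct and follows essentially the same route as the paper: both exploit the upper-triangular structure coming from the tree-like ordering, via the vectors $r_\ell=\sum_{v\in e_\ell}a_{\ell,v}\,v\in R$ whose coefficient on $v_\ell$ is nonzero and whose support lies in $\{v_\ell,\dots,v_n\}$. The only difference is in the finish: the paper records these coefficients in an $(n-s+1)\times n$ upper-triangular matrix $M$ and applies Sylvester's rank inequality to $M\cdot N$ (where $N$ has rows $v_1,\dots,v_n$), whereas you extract an explicit linearly independent subfamily indexed by the places where the filtration $V_i=\mathrm{span}(v_i,\dots,v_n)$ drops in dimension -- a slightly more self-contained but equivalent argument.
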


\begin{proof}
 Let $v_1,\dots,v_n$ be an ordering of $U$ satisfying that the vertex $v_{\ell}$ in the subhypergraph of $H$ induced on $\{v_{\ell},\dots,v_n\}$ has degree $1$ for $\ell=1,\dots,n-s+1$. Let $e_{\ell}$ denote the unique edge containing $v_{\ell}$ contained in $\{v_{\ell},\dots,v_n\}$. Define the $(n-s+1)\times n$ sized matrix $M$ over $\mathbb{F}_q$ as follows. As $e_{\ell}$ is an edge of $H_{W\rightarrow R}$, there exist coefficients $a_1,\dots,a_n\in\mathbb{F}_q^n$ such that $\sum_{i=1}^n a_i v_i\in R$, and $a_i\neq 0$ if and only if $v_i\in e_{\ell}\subset \{v_{\ell},\dots,v_n\}$. Set $M(\ell,i)=a_i$. 

 Then $M$ is an upper triangular matrix such that $M(\ell,\ell)\neq 0$ for $\ell=1,\dots,n-s+1$, thus $\mbox{rank}(M)=n-s+1$. Let $N$ be the $n\times k$ matrix, whose rows are the vectors $v_1,\dots,v_n$. Then $\mbox{rank}(N)=\dim(U)$ and the rows of the matrix $M\cdot N$ are elements of $\mbox{pspan}(H)\cap R$. Thus, 
 $$\dim(\mbox{pspan}(H)\cap R)\geq \mbox{rank}(M\cdot N)\geq \mbox{rank}(M)+\mbox{rank}(N)-n= \dim(U)-s+1.$$
 Here, the second inequality is due to Sylvester's rank inequality.
\end{proof}

\section{Constructions of strong blocking sets}
\subsection{Codimension 2}\label{sect:s=2}

As a warm-up, in this section, we present an explicit construction of a strong 2-blocking set of size $O(q^2 k)$ for every sufficiently large prime power $q$ and sufficiently large $k$. Given a set $B\subset \mathbb{F}_q^k$, let $\tilde{B}\subset PG(k-1,q)$ denote $B/\sim$, where $x\sim y$ if $x=cy$ for some $c\neq 0$.

\medskip

\noindent
\textbf{Construction.} Let $\alpha:=1/8$ and $d=258$, then $d-1$ is a prime congruent to 1 modulo 4. Let $\lambda:=2\sqrt{d-1}\approx 32.06$, and note that $\alpha>\lambda/d$ is satisfied.
\begin{itemize}
    \item Assume that $q$ is a prime power sufficiently large to satisfy the requirements of Lemma \ref{lemma:constr1} with $s=2$ and $\varepsilon_0=\alpha/2$. That is, there is an explicit construction  $W_0\subset \mathbb{F}_q^k$ of $n_0=O(k)$ vectors such that any subset of $W_0$ of size at least $\alpha n_0/2$ spans $\mathbb{F}_q^k$, and any three vectors in $W_0$ are linearly independent.
    
    \item Let $n_0\geq n=(1+o(1))n_0$ for which there is an explicit construction $G$ of an $(n,d,\lambda)$-graph. This exists by Lemma \ref{lemma:Ramanujan}. Associate the vertices of $G$ with an arbitrary $n$ element subset $W$ of $W_0$. Note that 
    any $\alpha n$ elements of $W$ span $\mathbb{F}_q^k$, and any $3$ elements of $W$ are linearly independent.
    
    \item A \emph{cherry} in $G$ is a 3 element set $\{x,y,z\}$ such that $xy,xz\in E(G)$. Let $H$ be the 3-uniform hypergraph, whose edges are the cherries of $G$.

    \item  Let $B=\bigcup_{f\in E(H)}\mbox{span}(f)$.
\end{itemize}   

\begin{theorem}
$\tilde{B}$ is a strong 2-blocking set of size $O(q^2 k)$ in $PG(k-1,q)$.
\end{theorem}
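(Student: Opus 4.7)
The size bound is straightforward: $G$ contributes $|E(H)| = n\binom{d}{2} = O(k)$ cherries, and each $3$-dimensional span contains $q^2+q+1$ projective points, so $|\tilde{B}| = O(q^2 k)$. For the strong $2$-blocking property I plan to argue by contradiction: assume some codimension-$2$ subspace $L$ of $\mathbb{F}_q^k$ satisfies $\langle B \cap L\rangle \subsetneq L$, so $B \cap L$ lies in a codimension-$3$ subspace $L^* \subseteq L$. Let $\phi \colon \mathbb{F}_q^k \twoheadrightarrow \mathbb{F}_q^k/L^* \cong \mathbb{F}_q^3$ be the quotient, with a basis chosen so that $\phi(L) = \langle e_3\rangle$. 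The plan is to transfer this obstruction to a rigid constraint on $\phi|_W$ and refute it via the expander mixing lemma.

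I first record three consequences of the hypothesis. (a) $A := W \cap L$ satisfies $A \subseteq B \cap L \subseteq L^*$, and since $L^*$ is a proper subspace, the general-position property of $W$ forces $|A| < \alpha n$. (b) No $v \in W$ has $\phi(v) \in \langle e_3\rangle \setminus \{0\}$, since such $v$ would lie in $(W \cap L) \setminus L^* \subseteq (B \cap L) \setminus L^* = \emptyset$. (c) For every cherry $\{x,y,z\}$, the vectors $\phi(x),\phi(y),\phi(z)$ are linearly dependent in $\mathbb{F}_q^3$; otherwise they span all of $\mathbb{F}_q^3$ and some linear combination $w$ of $x,y,z$ satisfies $\phi(w) = e_3$, putting $w \in L \setminus L^*$ and also in $\mathrm{span}(x,y,z) \subseteq B$, contradicting $B \cap L \subseteq L^*$. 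Step (c) is the main structural input; it parallels the kind of constraint one would extract from Lemma~\ref{lemma:tree-like}.

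Combining (c) with the expander structure, I colour each $v \in W \setminus A$ by its direction $c_v := \langle \phi(v)\rangle \in \mathbb{P}^2(\mathbb{F}_q) \setminus \{\langle e_3\rangle\}$. Each colour class $W_c$ lies in the codimension-$2$ subspace $\phi^{-1}(\langle v_c\rangle)$, so general position gives $|W_c| < \alpha n$. Applying (c) to all pairs in $N(x)$ forces $\phi(\{x\}\cup N(x))$ into a subspace $U_x \subseteq \mathbb{F}_q^3$ of dimension $\leq 2$. For every \emph{bichromatic} edge $xy$ (with $c_x \neq c_y$), both $U_x$ and $U_y$ contain the $2$-dimensional $\langle \phi(x),\phi(y)\rangle$, so $U_x = U_y$. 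Propagating along the bichromatic subgraph $G' \subseteq G$, this $2$-dimensional subspace is constant on each connected component $C$ of $G'$, so $\phi(C)$ lies in a common $2$-dimensional subspace of $\mathbb{F}_q^3$. Pulling back, $C$ lies in a hyperplane of $\mathbb{F}_q^k$, and general position yields $|C| < \alpha n$. Since $\alpha d > \lambda$ in the construction, Lemma~\ref{lemma:component} then forces $e(G') < \alpha d n$.

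The proof closes with a double count of $e(G[W \setminus A])$. The expander mixing lemma gives the lower bound $e(G[W \setminus A]) \geq \tfrac{d(1-\alpha)^2}{2}n - \tfrac{\lambda}{2}n$. On the other hand, the monochromatic edges total at most $\tfrac{(\alpha d + \lambda)}{2}n$ (by expander mixing inside each $W_c$, using $|W_c| < \alpha n$), and the bichromatic edges contribute at most $\alpha d n$; combining and cancelling $n$ yields $(1-\alpha)^2 \leq 3\alpha + 2\lambda/d$. Plugging in $\alpha = 1/8$ reduces this to $25/64 \leq 2\lambda/d$; but the construction's choice $d = 258$ and $\lambda = 2\sqrt{257}$ gives $2\lambda/d = 4\sqrt{257}/258 \approx 0.248 < 25/64 \approx 0.391$, the desired contradiction. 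The hard part of the argument is really the structural step (c) and the bichromatic-propagation argument of the previous paragraph; the final expander-mixing calculation is tight but routine, and is exactly what dictates the construction's choice of $d = 258$ and $\alpha = 1/8$.
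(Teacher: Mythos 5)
Your proof is correct, but it takes a genuinely different route from the paper's. The paper argues constructively: it defines an auxiliary hypergraph $F\subseteq H_{W\rightarrow L}$ by recording, for each cherry, the support of a nonzero point of $L$ in its span; it bounds the singleton edges ($|A|<\alpha n$) and the $2$-element edges of $F$ lying on edges of $G$ (a large component of that subgraph would be a $2$-tree-like subhypergraph contradicting Lemma~\ref{lemma:tree-like}, so Lemma~\ref{lemma:component} caps its edge count by $\alpha dn$); the surviving edges form a subgraph $G^*$ with a component of size at least $n/4$, whose spanning tree is upgraded to a $3$-tree-like subhypergraph $K$ of $F$, and Lemma~\ref{lemma:tree-like} (Sylvester's rank inequality) yields $\dim(\mathrm{pspan}(K)\cap L)\geq k-2$. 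You instead argue by contradiction entirely inside the $3$-dimensional quotient $\mathbb{F}_q^k/L^*$: the hypothesis forces every cherry to map to a dependent triple, which you convert into a projective colouring of $W\setminus A$ whose colour classes and bichromatic components are each confined to proper subspaces of $\mathbb{F}_q^k$, and a single expander-mixing double count finishes. Your steps all check out: the deduction $\dim U_x\leq 2$ implicitly uses $\phi(x)\neq 0$ (pairwise dependence modulo $\langle\phi(x)\rangle$ yields a line only then), but you invoke $U_x$ only for coloured vertices, where this holds; the claim $W\subseteq B$ needed for (a) and (b) holds since every vertex lies in a cherry; and the final inequality $(1-\alpha)^2\leq 3\alpha+2\lambda/d$ is indeed violated by $\alpha=1/8$, $d=258$. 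What your approach buys is a self-contained argument that dispenses with the tree-like machinery and the explicit construction of $K$; what it gives up is precisely that machinery, which is the point of this warm-up section --- the colouring argument leans on the quotient being $3$-dimensional and does not obviously scale to general $s$, whereas the paper's decomposition into $A$, $G'$, $G^*$ and a tree-like witness is the template reused in Sections~\ref{sect:large_prime} and~\ref{sect:small_prime}.
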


\begin{proof}
As $G$ is $d$-regular, it contains at most $nd^2$ cherries, so $|B|\leq q^3 n d^2=O(q^3 n)$. Therefore, $|\tilde{B}|=O(q^2 k)$.

Next, we prove that for every $(k-2)$-dimensional subspace $L<\mathbb{F}_q^k$, the set $L\cap B$ spans $L$. Define the hypergraph $F$ on vertex set $W$ as follows: for every cherry $\{x_1,x_2,x_3\}$, there exists at least one non-zero point $a_1x_1+a_2x_2+a_3x_3$ in $L$ for some $a_1,a_2,a_3\in\mathbb{F}_q$. Here, we are using that any three elements of $W$ are linearly independent. Add to the hypergraph $F$ the set  $\{x_i:a_i\neq 0\}$ as an edge. Then $F$ is $3$-bounded, and each cherry contributes a 1, 2, or 3 element edge to $F$. Furthermore, $\mbox{pspan}(F)\subset B$ and  $F$ is a subhypergraph of the proper linear combination hypergraph $H_{W\rightarrow L}$.

 Let $A\subset W$ be the set of one-element edges of $F$. Then $|A|< \alpha n$, otherwise $\dim(A)=k$, which is impossible as $A\subset L$. Let $G'$ be the subgraph of $G$ composed of those edges, that are also edges of $F$. Then $G'$ has no component of size more than $\alpha n$. Indeed, otherwise, if $T$ is a tree of size $\alpha n$ in $G'$, then $T$ is a $2$-tree-like subhypergraph of $F$ satisfying $\dim(V(T))=k$. Hence, by Lemma \ref{lemma:tree-like}, $$\dim(\mbox{pspan}(F)\cap L)\geq \dim(V(T))-1>\dim(L),$$ contradiction. As $G'$ has no component of size at least $\alpha n$, Lemma \ref{lemma:component} implies that $e(G')\leq \alpha d n$.

 Finally, let $G^*$ be the  subgraph of $G$ composed of those edges that are disjoint from $A$, and are not contained in $G'$. Then 
 $$e(G^*)\geq e(G)-|A|d-e(G')\geq \frac{dn}{2}-\alpha d n-\alpha d n\geq \frac{dn}{4}.$$
 Using Lemma \ref{lemma:component} again, we conclude that $G^*$ contains a connected component of size at least $n/4$. Let $T$ be a spanning tree of $G^*$ and let $y_1,\dots,y_m$ be an enumeration of the vertices of $T$ such that $y_i$ is a leaf of the subtree $T[y_i,\dots,y_m]$. We define a 3-tree-like subhypergraph $K$ of $F$ as follows. For every $i\in \{1,\dots,n-2\}$, there exists $i<j<j'$ such that $y_iy_j$ and $y_jy_{j'}$ are edges of $T$. But then $\{y_i,y_j,y_{j'}\}$ is a cherry of $G$, and none of the sets $\{y_i\}, \{y_j\},\{y_{j'}\}, \{y_i,y_j\}, \{y_jy_{j'}\}$ is an edge of $F$, so  at least one of $\{y_i,y_{j'}\}$ or $\{y_i,y_j,y_{j'}\}$ is an edge of $F$. Add this edge to $K$. 
 Then, $K$ is $3$-tree-like on vertex set $\{y_1,\dots,y_m\}\subset W$, where $m\geq n/4>\alpha n$, so $$\dim(\mbox{pspan}(K)\cap L)\geq \dim(K)-2=k-2,$$ by Lemma \ref{lemma:tree-like}. In conclusion, $\mbox{pspan}(K)\cap L\subset B\cap L$, thus $B\cap L$ spans $L$. See Figure \ref{fig:1} for an illustration of $K$.
 \end{proof}

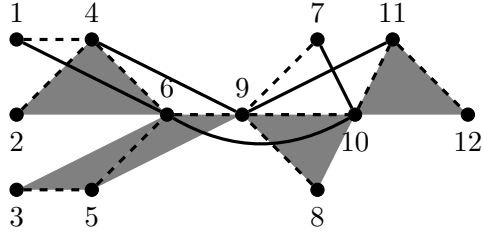
\begin{figure}
    \centering
    \begin{tikzpicture}
        \fill[gray] (4,0) --(2.5,0) -- (3,1);
        \fill[gray] (2,-1) --(1,0) -- (2.5,0) ;
        \fill[gray] (-1,-1) --(1,0) -- (0,0)  ;
        \fill[gray] (-1,1) --(-2,0) -- (0,0)  ;
        \fill[gray] (0,0) --(-2,-1) -- (-1,-1)  ;

        \node[vertex,label=above:{$1$}] (v1) at (-2,1) {};
        \node[vertex,label=below:{$2$}] (v2) at (-2,0) {};
        \node[vertex,label=below:{$3$}] (v3) at (-2,-1) {};
        \node[vertex,label=above:{$4$}] (v4) at (-1,1) {};
        \node[vertex,label=below:{$5$}] (v5) at (-1,-1) {};
        \node[vertex,label=above:{$6$}] (v6) at (0,0) {};
        \node[vertex,label=above:{$7$}] (v7) at (2,1) {};
        \node[vertex,label=below:{$8$}] (v8) at (2,-1) {};
        \node[vertex,label=above:{$9$}] (v9) at (1,0) {};     
        \node[vertex,label=below:{$10$}] (v10) at (2.5,0) {};     
        \node[vertex,label=above:{$11$}] (v11) at (3,1) {};
        \node[vertex,label=below:{$12$}] (v12) at (4,0) {};

        \draw[dashed,very thick] (v11) -- (v12) ;
        \draw[dashed,very thick] (v10) -- (v11) ;
        \draw[dashed,very thick] (v9) -- (v10) ;
        \draw[dashed,very thick] (v8) -- (v9) ;
        \draw[dashed,very thick] (v7) -- (v9) ;
        \draw[dashed,very thick] (v6) -- (v9) ;
        \draw[dashed,very thick] (v5) -- (v6) ;
        \draw[dashed,very thick] (v4) -- (v6) ;
        \draw[dashed,very thick] (v3) -- (v5) ;
        \draw[dashed,very thick] (v2) -- (v4) ;
        \draw[dashed,very thick] (v1) -- (v4) ;  

        \draw[very thick] (v9) -- (v11) ;
        \draw[very thick] (v7) -- (v10) ;
        \draw[very thick] (v6) edge[bend right] (v10) ;
        \draw[very thick] (v4) -- (v9) ;
        \draw[very thick] (v1) -- (v6) ;

    \end{tikzpicture}
    \caption{An illustration of a 3-tree-like subhypergraph $K$ we might find. The dashed edges are the edges of the tree $T$, while the black and gray edges (which have size 2 and 3, respectively) are the edges of $K$. The edges of $K$ in order are $\{1,6\}, \{2,4,6\}, \{3,5,6\}, \{4,9\}, \{5,6,9\},\{6,10\},$ $\{7,10\}, \{8,9,10\},\{9,11\}, \{10,11,12\}$.}
    \label{fig:1}
\end{figure}

\subsection{Explicit constructions via size-Ramsey numbers}\label{sect:size-ramsey}

In this section, we present an explicit construction of a strong $s$-blocking set  of size $O_s(q^{s}k)$ in $PG(k-1,q)$, assuming $q$ is sufficiently large with respect to $s$. Here, the constant hidden by the $O_s(.)$ notation is astronomical as a function of $s$, which will be greatly improved in Section \ref{sect:large_prime} by a different construction. However, the construction presented in this section highlights  some interesting connections to extremal combinatorics, in particular to the theory of \emph{size-Ramsey numbers}, so we believe it is beneficial to present it.

Given $r$-uniform hypergraphs $G,H_1,H_2$, we write $(H_1,H_2)\rightarrow G$ if for any coloring of the edges of $G$ with red or blue, $G$ contains either a red copy of $H_1$ or a blue copy of $H_2$. The \emph{size-Ramsey number} of $(H_1,H_2)$ is the smallest number of edges in an $r$-uniform hypergraph $G$ for which $(H_1,H_2)\rightarrow G$, and it is denoted by $\hat{r}(H_1,H_2)$. In case $H_1=H_2=H$, we may write simply $H\rightarrow G$ instead of $(H,H)\rightarrow G$, and $\hat{r}(H)$ instead of $\hat{r}(H,H)$. 

This notion was first introduced by Erd\H{o}s, Faudree, Rousseau and Schelp in 1978 \cite{EFRS} as a natural extension of the usual hypergraph Ramsey numbers. We highlight that the traditional Ramsey number $r(H_1,H_2)$ of $(H_1,H_2)$ is the minimal number of vertices in a graph $G$ for which $(H_1,H_2)\rightarrow G$ (in which case we can always assume that $G$ is a complete graph). One of the earliest results concerning size-Ramsey numbers is due to Beck \cite{beck}, who showed that if $P_n$ is the path of length $n$, then $\hat{r}(P_n)=O(n)$. In the past four decades, this area went through tremendous growth, and the following theorem of Letzter,  Pokrovskiy, and Yepremyan \cite{LPY} generalizes a long list of results in the topic.  The $r$-uniform \emph{tight path} of length $\ell$, denoted by $P_{r,\ell}$, is a sequence of $\ell+r-1$ vertices $v_1,\dots,v_{\ell+r-1}$ such that $\{v_{i},\dots,v_{i+r-1}\}$ is an edge for $i=1,\dots,\ell$. See Figure \ref{fig:tight_path} for an illustration. The \emph{$t$-power} of such a tight path, denoted by $P_{r,\ell}^t$, is the $r$-uniform hypergraph whose edges are all the $r$-sets contained in some interval of length $r+t-1$ among $v_1,\dots,v_{\ell+r-1}$. 

	\begin{figure}
	\begin{center}
		\begin{tikzpicture}[scale=3]
			\node[vertex,minimum size=4pt,label=below:{\small $1$}] at (0,0) {};
			\node[vertex,minimum size=4pt,label=above:{\small $2$}] at (0.25,0.5) {};
			\node[vertex,minimum size=4pt,label=below:{\small $3$}] at (0.5,0) {};
			\node[vertex,minimum size=4pt,label=above:{\small $4$}] at (0.75,0.5) {};
			\node[vertex,minimum size=4pt,label=below:{\small $5$}] at (1,0) {};
			\node[vertex,minimum size=4pt,label=above:{\small $6$}] at (1.25,0.5) {};
			\node[vertex,minimum size=4pt,label=below:{\small $7$}] at (1.5,0) {};
			\node[vertex,minimum size=4pt,label=above:{\small $8$}] at (1.75,0.5) {};
			\draw (0,0) -- (1.5,0);
			\draw (0.25,0.5) -- (1.75,0.5);
			\draw (0,0) -- (0.25,0.5) -- (0.5,0) -- (0.75,0.5) -- (1,0) -- (1.25,0.5) -- (1.5,0) -- (1.75,0.5);
		\end{tikzpicture}
	\end{center}
 \caption{An illustration of a 3-uniform tight path of length 6.}
 \label{fig:tight_path}
\end{figure}
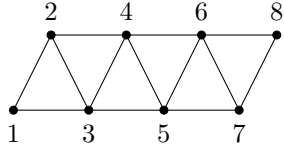

In \cite{LPY}, it is proved that for every $r,t$, there exists $C=C(r,t)>0$ such that 
$$\hat{r}(P_{r,n}^{t})\leq Cn.$$
For our applications, we need bounds on the size-Ramsey number of $(P_{r,n},K_{r,t})$, where $K_{r,t}$ denotes the $r$-uniform clique of size $t$. However, noting that $P_{r,n}^t$ contains both $P_{r,n}$ and $K_{r,t}$, we get the following immediate corollary of the previous theorem. For every $r,t$, there exists $C=C(r,t)>0$ such that 
$$\hat{r}(P_{r,n}, K_{r,t})\leq Cn.$$

Moreover, one can explicitly define a hypergraph $H$ with at most $Cn$ edges such that every two coloring of the edges of $H$ contains a monochromatic copy of $P_{r,n}^t$.  Given a graph $G$, let $G^u$ be the $u$-power of $G$, that is, any two vertices of distance at most $u$ are joined by an edge. Also, $G[K_D]$ denotes the $D$-blow-up of $G$, which is the graph in which every vertex is replaced by a clique of size $D$, and if there is an edge between two vertices in $G$, then all edges are drawn between the corresponding cliques of $G[K_D]$. We also use $K_r(G)$ to denote the $r$-uniform hypergraph, whose edges are the $r$-cliques of $G$. Let $G$ be an $\varepsilon$-expander, which is defined to be a graph that satisfies that between any pair of vertex sets of size at least $\varepsilon\cdot v(G)$, there is an edge. By the Expander Mixing lemma, $(n,d,\lambda)$-expanders satisfy this property with $\varepsilon=\lambda/d$. The following is a (very) special subcase of Theorem 6.1 in \cite{LPY}, where we only use that $P_{r,\ell}^t$ contains both a tight path of length $\ell$, and a clique of size $t$.

\begin{theorem}\label{thm:two_color}
    Let $r,t$ be integers, then if $\varepsilon^{-1}$ and $\Delta$ are sufficiently large with respect to $r,t$, and $\alpha,u,D$ are sufficiently large with respect to $\varepsilon,\Delta$, the following holds. Let $n$ be an integer, let $G$ be an $\varepsilon$-expander on at least $\alpha n$ vertices of maximum degree $\Delta$, and let $H=K_r(G^u[K_D])$. Then 
    $$(P_{r,n},K_{r,t})\rightarrow H.$$
\end{theorem}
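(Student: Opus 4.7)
The plan is to treat Theorem 6.1 of \cite{LPY} as a black box: under essentially the same hypotheses on $\varepsilon,\Delta,\alpha,u,D$, it asserts the symmetric arrow $P_{r,n}^t \to H$ for $H = K_r(G^u[K_D])$, meaning that every red/blue coloring of $E(H)$ contains a monochromatic copy of $P_{r,n}^t$. My task then reduces to showing how a monochromatic copy of $P_{r,n}^t$ yields either a red $P_{r,n}$ or a blue $K_{r,t}$, which is precisely the asymmetric arrow stated in the theorem.

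For this reduction I would verify two elementary containments inside $P_{r,n}^t$. First, $P_{r,n}^t$ contains $P_{r,n}$ as a subhypergraph, since taking the $t$-power only adds edges; the edges of the original tight path (i.e., the $1$-power) are still present. Second, $P_{r,n}^t$ contains $K_{r,t}$: by definition, every interval $\{v_i,\ldots,v_{i+r+t-2}\}$ of $r+t-1$ consecutive vertices spans a complete $r$-uniform hypergraph, and since $r+t-1 \geq t$, any $t$ vertices from such an interval give a copy of $K_{r,t}$.

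With these two containments in hand, fix an arbitrary $2$-coloring of $E(H)$ and apply the black-box result to extract a monochromatic copy $P^*$ of $P_{r,n}^t$. If $P^*$ is red, then the embedded red $P_{r,n}$ provides a red copy of the first target; if $P^*$ is blue, the embedded blue $K_{r,t}$ provides a blue copy of the second target. In either case the arrow $(P_{r,n},K_{r,t}) \to H$ holds, completing the argument.

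The only genuinely difficult step is Theorem 6.1 of \cite{LPY}, which I am importing wholesale. Intuitively, its proof exploits the blow-up $K_D$ to provide many parallel "copies" at each position of the tight path (enough slack to absorb local conflicts), uses the $u$-power $G^u$ to allow flexible local extensions of a partial tight path, and uses the expander property of $G$ to stitch extended segments together across the whole graph. I would make no attempt to reprove this here, since the point of the construction in this section is precisely to plug the LPY bound into the blocking-set framework developed earlier.
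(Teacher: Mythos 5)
Your proposal is correct and follows essentially the same route as the paper: the authors also cite Theorem~6.1 of \cite{LPY} as a black box (giving $P_{r,n}^t \rightarrow H$) and deduce the asymmetric arrow $(P_{r,n},K_{r,t})\rightarrow H$ from exactly the two containments you verify, namely that $P_{r,n}^t$ contains a tight path of length $n$ and a clique on $t$ vertices. Your write-up merely spells out the containments in slightly more detail than the paper's one-line remark.
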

We point out that if $G$ has maximum degree $d$, then $v(H)=Dv(G)$ and $e(H)\leq (2d)^{ur} v(H)$. Unfortunately, the dependence of the parameters $\varepsilon,\Delta,\alpha,u,D$ on $r$ and $t$ is quite poor. While in \cite{LPY} there are no explicit dependencies calculated, these parameters grow at least as fast as the usual Ramsey number of an $r$-uniform clique of size $t$. This number is at least $2^{2^{\dots 2^{\Omega(t)}}}$, where the tower is of height $r-2$. 

\medskip

To construct our strong $s$-blocking set in $\mathbb{F}_q^k$, we consider the $(s+1)$-uniform hypergraph $H$ given by the previous theorem (with parameters $t,n$ chosen appropriately with respect to $s$ and $k$) and associate its vertices with certain vectors in $\mathbb{F}_q^k$. Then our strong $s$-blocking set $B$  is the set of all linear combinations of the edges of $H$. Given a hyperplane $L$ of codimension $s$, $L$ defines a two coloring on the edges, where an edge is colored red if there is a proper linear combination of the vertices of the edge contained in $L$, otherwise it is colored blue. In our next lemma, we show that if any $s+1$ vertices of $H$ are linearly independent, then this coloring cannot contain a large blue clique.

\begin{lemma}\label{lemma:exactly_s+1}
For every positive integer $s$, there exists $R=R(s)$ such that the following holds for every  prime power $q$ larger than $s$. Let $U\subset \mathbb{F}_q^k$ be a set of $R$ vectors such that every $s+1$ elements of $U$ are linearly independent.  If $L$ is an $s$-codimensional subspace of $\mathbb{F}_q^k$, then $H_{U\rightarrow L}$ contains an edge of size exactly $s+1$.
\end{lemma}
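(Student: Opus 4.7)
The plan is to induct on $s$. The base case $s=1$ follows by pigeonhole on the projection $\pi: \mathbb{F}_q^k \to \mathbb{F}_q$: taking $R(1)=3$, either $|U\cap L|\geq 2$ (whence the difference of two such vectors lies in $L$) or $|U\setminus L|\geq 2$ (whence a nonzero scalar combination of two such vectors lies in $L$), giving an edge of size exactly $2$.

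For the inductive step, I let $R(s)$ be large (to be chosen recursively in terms of $R(s-1)$) and let $\pi: \mathbb{F}_q^k \to \mathbb{F}_q^s$ be the projection with kernel $L$. I first handle two easy cases. If $|U\cap L|\geq s+1$, any $s+1$ vectors from $U\cap L$ with all-ones coefficients form the desired edge. If some $v\in \mathbb{F}_q^s$ is attained at least $s+1$ times in $\pi(U)$, the condition $q>s$ lets me choose nonzero $a_1,\ldots,a_{s+1}\in\mathbb{F}_q^*$ with $\sum a_i=0$ (for instance, $s$ copies of $1$ and one copy of $-s$, nonzero because $q>s$), giving $s+1$ vectors whose combination projects to $(\sum a_i)v=0\in\mathbb{F}_q^s$ and hence lies in $L$.

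Otherwise $|U\cap L|\leq s$ and every value of $\pi(U)$ has multiplicity at most $s$. I pick $u_1\in U\setminus L$, form $L_1=L+\langle u_1\rangle$ of codimension $s-1$, and apply the inductive hypothesis to $U\setminus\{u_1\}$ and $L_1$ to obtain a size-$s$ edge $\{u_2,\ldots,u_{s+1}\}$ with $\sum_{i=2}^{s+1} a_i u_i = \ell + c\, u_1$, where $\ell\in L$, $c\in\mathbb{F}_q$, and all $a_i\neq 0$. If $c\neq 0$, then $\{u_1,u_2,\ldots,u_{s+1}\}$ is an edge of $H_{U\rightarrow L}$ of size exactly $s+1$ with coefficients $(-c,a_2,\ldots,a_{s+1})$, all nonzero.

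The crux is the case $c=0$, in which $E:=\{u_2,\ldots,u_{s+1}\}$ is only a size-$s$ edge of $H_{U\rightarrow L}$ and must be extended. Setting $V=\mathrm{span}(\pi(u_2),\ldots,\pi(u_{s+1}))$ (of dimension at most $s-1$ by the relation $\sum a_i \pi(u_i)=0$), any $u^*\in U\setminus E$ with $\pi(u^*)\in V$ produces the extension: writing $\pi(u^*)=\sum d_i\pi(u_i)$, the family of valid coefficient tuples is $(d_i+t a_i)_{t\in\mathbb{F}_q}$, and at most $s$ choices of $t$ zero some coefficient, so $q>s$ yields a valid $t$ with all coefficients nonzero, producing an edge $\{u^*,u_2,\ldots,u_{s+1}\}$ of size exactly $s+1$. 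The remaining difficulty is guaranteeing such a $u^*$ exists: if not, $\pi(U\setminus E)$ lies entirely outside the hyperplane $V$, a strong restriction that I would circumvent by iterating the inductive step with different choices of $u_1\in U\setminus L$ (each yielding a potentially different hyperplane $V$) and combining these constraints to either find a valid $u^*$ or derive a contradiction when $R(s)$ is taken large enough relative to $s$.
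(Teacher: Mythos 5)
Your inductive strategy is genuinely different from the paper's argument, and most of the individual steps you do carry out are sound (the base case, the two easy cases, the extension of a size-$s$ edge by a vector $u^*$ with $\pi(u^*)\in V$ using $q>s$ to dodge the at most $s$ bad values of $t$). But the proof has a real gap exactly where you flag it: in the case $c=0$ you obtain only a size-$s$ edge $E$ whose projection spans a proper subspace $V\subset\mathbb{F}_q^s$, and you need some $u^*\in U\setminus E$ with $\pi(u^*)\in V$. Your proposed remedy --- iterate over different choices of $u_1$ and ``combine these constraints to either find a valid $u^*$ or derive a contradiction'' --- is not an argument. Each iteration produces a different pair $(E_{u_1},V_{u_1})$, and the constraint is only that $\pi(U\setminus E_{u_1})$ avoids $V_{u_1}$; it is not shown why finitely many such avoidance conditions are incompatible, nor how large $R(s)$ must be for this to happen. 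Since this is precisely the combinatorial heart of the lemma (finding a nowhere-zero dependency among \emph{exactly} $s+1$ projected points, rather than a smaller circuit), the proof is incomplete. A secondary, fixable slip: in the multiplicity case you take coefficients $1,\dots,1,-s$ and claim $-s\neq0$ because $q>s$; this fails when the characteristic divides $s$ (e.g.\ $s=2$, $q=4$), though for $q\geq3$ one can always choose $s+1$ nonzero field elements summing to zero by a slightly more careful selection.

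For comparison, the paper avoids induction entirely and argues by contradiction with a counting scheme: assuming no $(s+1)$-edge exists, it first shows (via a random linear combination of the witnessing vectors $y_i\in L$, using $q>s$) that no collection of small edges can have union of size $s+1$; this forbids large ``stars'' in each $F_\ell$, bounds the number of $(s+1)$-subsets of $U$ containing any edge of size $\leq s$ by $sR^s$, and for $R=(s+2)!$ this is less than $\binom{R}{s+1}$, so some $(s+1)$-set carries no edge at all --- contradicting that its $(s+1)$-dimensional span must meet the codimension-$s$ subspace $L$ nontrivially. That global argument sidesteps precisely the extension problem on which your induction gets stuck.
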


\begin{proof}
We show that $R=(s+2)!$ suffices. Assume that $H_{U\rightarrow L}$ contains no edge of size $s+1$. For every $1\leq \ell\leq s$, let $F_{\ell}$ be the $\ell$-uniform hypergraph formed by the $\ell$-element edges of $H_{U\rightarrow L}$. 

\begin{claim}
$F_{\ell}$ contains no set of edges whose union has size $s+1$.
\end{claim}
\begin{proof}
Assume to the contrary that $e_1,\dots,e_m\in E(F_{\ell})$ such that $|e_1\cup\dots\cup e_m|=s+1$. Then for every $i=1,\dots,m$ and $x\in e_i$ there exists $a_{i,x}\in\mathbb{F}_q\setminus\{0\}$ such that 
$$y_i=\sum_{x\in e_i}a_{i,x} x\in L.$$
But then for arbitrary $b_1,\dots,b_m\in \mathbb{F}_q$, we also have
$$\sum_{i=1}^m b_i y_i=\sum_{x\in e_1\cup\dots \cup e_m} \left(\sum_{i: x\in e_i} b_i a_{i,x}\right)\cdot x\in L.$$
Using our assumption $q>s$, we can choose $b_1,\dots,b_m$ such that none of the coefficients $\sum_{i: x\in e_i} b_i a_{i,x}$ is $0$. Indeed, choosing $b_1,\dots,b_m$ randomly from the uniform distribution on $\mathbb{F}_q$, the probability that $\sum_{i:x\in e_i}b_i a_{i,x}=0$ is $1/q$, so with probability at least $1-s/q$, none of the coefficients are 0. This is a contradiction, as then $L$ contains a proper linear combination of the elements of $e_1\cup \dots\cup e_m$, so $e_1\cup\dots\cup e_m$ is an $s+1$ element edge of $H_{U\rightarrow R}$.
\end{proof}

In particular, the previous claim implies that $F_{\ell}$ contains no $(s+2-\ell)$-star, that is, $s+2-\ell$ edges, all of which intersect in a set of size $\ell-1$. As the number of $(\ell-1)$-sets is $\binom{R}{\ell-1}$, this gives
$$e(F_{\ell})\leq (s+1-\ell)\binom{R}{\ell-1}\leq s\binom{R}{\ell-1}.$$
Let $H_{\ell}$ be the $(s+1)$-uniform hypergraph of those edges $e$ which contain some edge of $F_{\ell}$. Then
$$e(H_{\ell})\leq \binom{R-\ell}{s+1-\ell}e(F_{\ell})\leq s\binom{R}{\ell-1}\binom{R-\ell}{s+1-\ell}\leq R^{s}.$$
Therefore, the total number of edges in the union of $H_{1},\dots,H_s$ is at most $s R^s$. Hence, if $R=(s+2)!$, then $sR^s<\binom{R}{s+1}$, which means that some $(s+1)$-element set $f\subset U$ is not an edge of any of the $H_1,\dots,H_s$. In other words, no linear combination of elements of $f$ is contained in $L$. But as the element of $f$ span a subspace of dimension $s+1$, this is impossible, contradiction.
\end{proof}

After these preparations, let us present our construction in detail.

\medskip

\noindent
    \textbf{Construction.} Let $t=R$ be given by Lemma \ref{lemma:exactly_s+1}, let $r=s+1$, and let $\varepsilon,\alpha>0$  and integers $u,D$ be given by Theorem \ref{thm:two_color}. Let $d=p+1$, where $p$ is the smallest prime congruent to 1 mod 4 larger than $16/\varepsilon^2$, and let $\lambda=2\sqrt{d-1}$. Note that $\lambda/d< \varepsilon$.

    \begin{itemize}
        \item  Let $q$ be a prime power and $k$ be an integer, both sufficiently large such that Lemma \ref{lemma:constr1} holds with $\varepsilon_0=1/(2D\alpha)$. That is, there is a set $W_0\in \mathbb{F}_q^k$ of $m_0=O_{s,\varepsilon_0}(k)$ vectors such that any $m_0/(2D\alpha)$ elements of $W_0$ span $\mathbb{F}_q^k$, and any $s+1$ are linearly independent. 

        \item Let $n_0=m_0/D$, and let $G$ be an explicit construction of an $(n, d,\lambda)$-graph for some $n_0>n=(1+o(1))n_0$. This exists by Lemma \ref{lemma:Ramanujan}. Note that $G$ is an $\varepsilon$-expander.

        \item Let $H=K_r(G^u[K_D])$, then $m:=v(H)=Dn>m_0/2$. Associate an $m$ element subset $W$ of $W_0$ arbitrarily with the vertices of $H$. Write $\ell=m/(\alpha D)$, and note that any $\ell$ elements of $W$ span $\mathbb{F}_q^k$, and any $s+1$ are linearly independent. Moreover, $(P_{r,\ell},K_{r,t})\rightarrow H$.
        
        \item Let $B=\bigcup_{f\in E(H)}\mbox{span}(f)$.
    \end{itemize}

\begin{theorem}
    $\tilde{B}$ is a strong $s$-blocking set of size $O_s(q^sk)$ in $PG(k-1,q)$.
\end{theorem}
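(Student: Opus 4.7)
The plan splits into two parts: an easy counting argument for the size bound, and a Ramsey-theoretic argument (followed by an application of Lemma \ref{lemma:tree-like}) for the spanning property.

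For the size bound, I would first observe that $H$ has $m = Dn = O_s(k)$ vertices, and that the underlying graph $G^u[K_D]$ has maximum degree $O_s(1)$: each vertex of $G$ has degree at most $d^u = O_s(1)$ in $G^u$, and the $D$-blow-up multiplies the degree by $D = O_s(1)$. Hence the number of $(s+1)$-cliques in $G^u[K_D]$ is $O_s(m) = O_s(k)$. Each edge $f \in E(H)$ is a set of $s+1$ linearly independent vectors (by our assumption on $W$), so $\mathrm{span}(f)$ contributes $(q^{s+1}-1)/(q-1) = O(q^s)$ projective points to $\tilde{B}$. Multiplying gives $|\tilde{B}| = O_s(q^s k)$.

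For the spanning property, fix any codimension-$s$ subspace $L < \mathbb{F}_q^k$. I would define a red-blue edge coloring of $H$: color an edge $e \in E(H)$ red if $e \in E(H_{W\rightarrow L})$, that is, if some proper linear combination of the vertices of $e$ lies in $L$, and color $e$ blue otherwise. By the choice of parameters we have the Ramsey property $(P_{r,\ell}, K_{r,t}) \rightarrow H$, so either a red tight path $P_{r,\ell}$ or a blue clique $K_{r,t}$ appears. The blue clique can be ruled out by Lemma \ref{lemma:exactly_s+1}: any $t = R(s)$ vertices of $W$ are in particular $(s+1)$-wise linearly independent, so among them there must exist an $(s+1)$-element edge of $H_{W\rightarrow L}$, forcing at least one red edge inside the clique. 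Hence a red tight path $P_{r,\ell}$ exists in $H_{W\rightarrow L}$.

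To finish, I would check that the tight path $P_{r,\ell}$ on vertices $v_1, \ldots, v_{\ell+r-1}$ is $r$-tree-like: in the induced subhypergraph on $\{v_i, \ldots, v_{\ell+r-1}\}$, the only edge containing $v_i$ is $\{v_i, v_{i+1}, \ldots, v_{i+r-1}\}$ (all earlier-starting edges have already been removed), so $v_i$ has degree $1$ for every $i \leq \ell$. Since $\ell + r - 1 \geq \ell$ and any $\ell$ elements of $W$ span $\mathbb{F}_q^k$, the vertex set of this tight path spans a subspace of dimension $k$. Lemma \ref{lemma:tree-like} with parameter $r = s+1$ then gives
\[
\dim\bigl(\mathrm{pspan}(P_{r,\ell}) \cap L\bigr) \geq k - r + 1 = k - s = \dim(L),
\]
and since $\mathrm{pspan}(P_{r,\ell}) \subset B$ by construction, $B \cap L$ spans $L$.

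The only place requiring care is the reduction to Lemma \ref{lemma:exactly_s+1}: the lemma needs $q > s$ and needs the $s+1$-wise linear independence of $W$, both of which are guaranteed by our construction. I expect no real obstacle — the heavy lifting has already been done in Theorem \ref{thm:two_color}, Lemma \ref{lemma:exactly_s+1}, and Lemma \ref{lemma:tree-like}, and the remaining work is essentially bookkeeping of parameters.
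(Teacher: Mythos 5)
Your proof is correct and takes essentially the same approach as the paper: color edges of $H$ red if they lie in $H_{W\rightarrow L}$, rule out a blue $K_{r,t}$-clique via Lemma \ref{lemma:exactly_s+1}, extract a red tight path via Theorem \ref{thm:two_color}, and feed its $r$-tree-like structure into Lemma \ref{lemma:tree-like}. The size-bound argument is also the same degree-counting bookkeeping, just phrased slightly more explicitly than the paper's one-liner.
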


\begin{proof}
The parameters $t,r,\varepsilon,\alpha,u,D,d$ only depend on $s$, which implies that $m=O_s(k)$. Hence, $|B|\leq |E(H)|q^{s+1}\leq (2d)^{ur} m=O_s(q^{s+1}k)$ and $\tilde{B}=O_s(q^s k)$.

We show that for every $s$-codimensional subspace $L<\mathbb{F}_q^k$, $B\cap L$ spans $L$. Color each edge $f\in E(H)$ with red or blue as follows. If there is a proper linear combination of the elements of $f$ that is contained in $L$, then color $f$ red, otherwise color $f$ blue. Note that the red edges are also edges of $H_{W\rightarrow L}$. By Lemma \ref{lemma:exactly_s+1}, this coloring contains no blue clique of size $t$. Hence, $H$ must contain a red tight path $P$ of size $\ell$. Then $\dim(V(P))=k$, as any $\ell$ elements of $V(H)$ span $\mathbb{F}_q^k$. But a tight path is also an $r$-tree-like hypergraph, hence Lemma \ref{lemma:tree-like} implies that $$\dim(\mbox{pspan}(H)\cap L)\geq \dim(V(P))-r+1=k-s.$$
Hence, as $\mbox{pspan}(H)\subset B$, we conclude that $B\cap L$ spans $L$.
\end{proof}

\subsection{Optimal constructions for large primes}\label{sect:large_prime}

 In this section, we present our best construction of a strong $s$-blocking set, assuming $q$ is sufficiently large with respect to $s$.

 \medskip

\noindent
\textbf{Construction.} Let $r=s+1$, and let $d=p+1$, where $p$ is the smallest prime congruent to 1 mod 4 such that $p>64r^2$, and let $\lambda=2\sqrt{d-1}$.
\begin{itemize}
    \item Let $q$ be a prime power and $k$ be an integer, both sufficiently large such that Lemma \ref{lemma:constr1} holds with $\varepsilon_0=1/(8rd^r)$. That is, there is a set $W_0\subset \mathbb{F}_q^k$  of $n_0=O_{s,\varepsilon}(k)$ vectors such that any $r$ elements of $W_0$ are linearly independent, and any $n_0/(8rd^r)$ elements of $W_0$ span $\mathbb{F}_q^k$.
    \item Let $n_0>n=(1+o(1))n_0$ be such that there exists an $(n,d,\lambda)$-graph $G$. This exists by Lemma \ref{lemma:Ramanujan}. Let $W$ be an $n$ element subset of $W_0$, and assign the elements of $W$ to the vertices of $G$. Note that any $n/(4rd^r)$ elements of $W$ span $\mathbb{F}_q^k$.
    \item Let $H=K_r(G^r)$, that is, the edges of $H$ are those $r$-tuples, whose every vertex is within distance at most $r$ from some vertex $x\in V(G)$. Note that $e(H)\leq n d^{r^2}$.
   
    \item Let $B=\bigcup_{f\in E(H)}\mbox{span}(f)$.
\end{itemize}

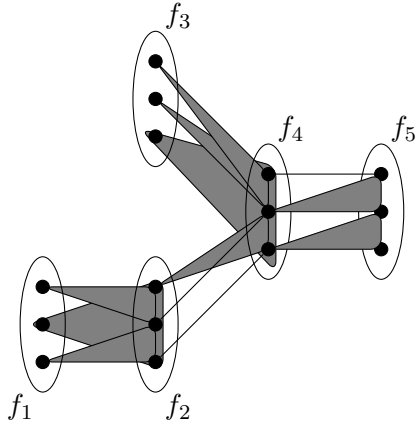
\begin{figure}
    \centering
    \begin{tikzpicture}
    \draw[rounded corners, fill=gray, fill opacity=0.5] (-1.7,1.1) -- (0.1,-0.8) -- (0.1,0.6) -- cycle;
    \draw[rounded corners, fill=gray, fill opacity=0.5] (-1.5,1.5) -- (0,0) -- (0,0.5) -- cycle;
    \draw[rounded corners, fill=gray, fill opacity=0.5] (-1.5,2) -- (0,0) -- (0,0.5) -- cycle;

    \draw[rounded corners, fill=gray, fill opacity=0.5] (-1.5,-1) -- (0,0) -- (0,-0.5) -- cycle;

    \draw[rounded corners, fill=gray, fill opacity=0.5] (-3.2,-1.5) -- (-1.4,-0.9) -- (-1.4,-2.1) -- cycle;
    \draw[rounded corners, fill=gray, fill opacity=0.5] (-3,-1) -- (-1.5,-1) -- (-1.5,-1.5) -- cycle;
    \draw[rounded corners, fill=gray, fill opacity=0.5] (-3,-2) -- (-1.5,-2) -- (-1.5,-1.5) -- cycle;
    
    \node[vertex] (f51) at (1.5,-0.5) {};
    \node[vertex] (f52) at (1.5,0) {};
    \node[vertex] (f53) at (1.5,0.5) {};
    \draw (f52) ellipse (0.3 and 0.9);
    \node at (1.8,1.1) {$f_5$};

    \node[vertex] (f41) at (0,-0.5) {};
    \node[vertex] (f42) at (0,0) {};
    \node[vertex] (f43) at (0,0.5) {};
    \draw (f42) ellipse (0.3 and 0.9);
    \node at (0.3,1.1) {$f_4$};

    \draw[rounded corners, fill=gray, fill opacity=0.5] (0,-0.5) -- (1.5,-0.5) -- (1.5,0) -- cycle; 
    \draw[rounded corners, fill=gray, fill opacity=0.5] (0,0) -- (1.5,0.5) -- (1.5,0) -- cycle;
    \draw (f43) -- (f53);

    \node[vertex] (f31) at (-1.5,1) {};
    \node[vertex] (f32) at (-1.5,1.5) {};
    \node[vertex] (f33) at (-1.5,2) {};
    \draw (f32) ellipse (0.3 and 0.9);
    \node at (-1.2,2.6) {$f_3$};

    \node[vertex] (f21) at (-1.5,-1) {};
    \node[vertex] (f22) at (-1.5,-1.5) {};
    \node[vertex] (f23) at (-1.5,-2) {};
    \draw (f22) ellipse (0.3 and 0.9);
    \node at (-1.2,-2.6) {$f_2$};

    \draw (f22) -- (f42) ;
    \draw (f23) -- (f41) ;

    \node[vertex] (f11) at(-3,-1) {};
    \node[vertex] (f12) at (-3,-1.5) {};
    \node[vertex] (f13) at (-3,-2) {};
    \draw (f12) ellipse (0.3 and 0.9);
    \node at (-3.3,-2.6) {$f_1$};

    \end{tikzpicture}
    \caption{An illustration for the proof of Theorem \ref{thm:large_prime}. An example of an $\ell=4$-tree-like hypergraph we might build on the union of the edges $f_1,\dots,f_5$.}
    \label{fig:3}
\end{figure}

\begin{theorem}\label{thm:large_prime}
$\tilde{B}$ is a strong $s$-blocking of size $2^{O(s^2\log s)}kq^s$ in $PG(k-1,q)$.
\end{theorem}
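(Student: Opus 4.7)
My plan is to prove the size bound and the strong-blocking property separately.

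For the size, each edge of $H = K_r(G^r)$ lies inside the radius-$r$ ball around some vertex of $G$, which contains at most $d^r$ vertices. Hence $e(H) \leq n\binom{d^r}{r} \leq n d^{r^2}$, and since each edge contributes at most $q^r$ vectors to $B$, $|\tilde{B}| \leq q^s n d^{r^2}$. With $d = O(r^2)$ and $n = O(k)$, this gives $|\tilde{B}| = 2^{O(s^2 \log s)} k q^s$ as claimed.

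For the blocking property, I would fix a codimension-$s$ subspace $L \subset \mathbb{F}_q^k$ and construct an $r$-tree-like subhypergraph $K$ of $H_{W \to L}$ whose edges lie inside edges of $H$ (so $\mathrm{pspan}(K) \subseteq B$) and whose vertex set spans $\mathbb{F}_q^k$. Lemma~\ref{lemma:tree-like} would then yield $\dim(\mathrm{pspan}(K) \cap L) \geq k - (r-1) = \dim L$, proving $B \cap L$ spans $L$. Following the $s=2$ blueprint, I would first mark $A := W \cap L$ as bad (with $|A| < n/(4rd^r)$ by the spanning hypothesis) and, for each $j \in \{2,\dots,r-1\}$, introduce an auxiliary subgraph $G_j \subseteq G$ of short $G$-paths that witness $j$-element proper linear combinations in $L$ coming from subsets of $H$-edges. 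Lemma~\ref{lemma:component} would bound each $|G_j|$ linearly in $\lambda n$, and removing $A$ and all $G_j$'s from $G$ leaves a subgraph with at least $dn/4$ edges; Lemma~\ref{lemma:component} then produces a connected component $C$ of size at least $n/(4rd^r)$.

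Next I would take a spanning tree $T$ of $C$ and enumerate its vertices $y_1,\dots,y_m$ so that each $y_i$ is a leaf of $T[y_i,\dots,y_m]$. For every $i \leq m - r + 1$, the ancestor chain $f_i := \{y_i, y_{p(i)}, y_{p^2(i)}, \dots, y_{p^{r-1}(i)}\}$ in $T$ has all its vertices at $G$-distance at most $r-1$ from $y_i$, so $f_i \in E(H)$. Since any $r$ vectors of $W$ are linearly independent, $\dim(\mathrm{span}(f_i) \cap L) \geq 1$, producing a proper linear combination of $f_i$ supported on some $g_i \subseteq f_i$. The cleaning step would force $y_i \in g_i$: a support not containing $y_i$ would be a subset of $\{y_{p(i)},\dots,y_{p^{r-1}(i)}\}$, a bad $j$-configuration corresponding either to a vertex of $A$ (for $j=1$) or to a $T$-subpath composed of edges of $G_j$ (for $2 \leq j \leq r-1$), both excluded by the construction of $T$. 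Setting $K := \{g_i : i \leq m-r+1\}$ on the vertex set $\{y_1,\dots,y_m\}$ with this ordering gives the $r$-tree-like property: $g_j$ for $j<i$ contains $y_j \notin \{y_i,\dots,y_m\}$ and $g_j$ for $j>i$ omits $y_i$ by construction, so $g_i$ is the unique edge of $K$ inside the suffix $\{y_i,\dots,y_m\}$ containing $y_i$. Finally, $|V(K)| = m \geq n/(4rd^r)$ ensures $V(K)$ spans $\mathbb{F}_q^k$.

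The main obstacle is engineering the auxiliary graphs $G_j$. In the $s=2$ case a single graph $G'$ suffices, but here $r-1$ graphs $G_2,\dots,G_{r-1}$ must be managed simultaneously, and every bad $j$-element subset of any ancestor chain must correspond to one of them. Encoding each bad support of size $j$ inside an edge of $H$ as a short $G$-structure whose density is controlled via the Expander Mixing Lemma is delicate; the parameter choice $d > 64r^2$ together with the spanning threshold $\varepsilon_0 = 1/(8rd^r)$ appears calibrated precisely so that the cumulative removal across these $r-1$ cleaning passes still leaves a linear-sized connected component in $G$, and ensuring that the correspondence between bad linear-algebraic supports and short combinatorial $G$-structures works uniformly for every ancestor chain is the technical heart of the proof.
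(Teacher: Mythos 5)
Your size bound is fine, but the blocking argument has a genuine gap at exactly the point you flag as ``the technical heart'': the auxiliary graphs $G_2,\dots,G_{r-1}$ are never constructed, and there is a real obstruction to constructing them along the lines you describe. In the $s=2$ warm-up the bad $2$-element supports are literally edges of the expander $G$, so Lemma~\ref{lemma:component} (via the expander mixing lemma) converts ``no large connected component of bad edges'' into ``few bad edges''. For $j\geq 3$ the bad $j$-element supports are arbitrary $j$-subsets of small balls in $G$, not edges of $G$, and no analogue of Lemma~\ref{lemma:component} is available for the resulting $j$-uniform hypergraphs. Any attempt to ``encode'' a bad $j$-set by a short path or a single $G$-edge is many-to-one in both directions: one $G$-edge can witness many bad $j$-sets whose other vertices are far away, and deleting it does not exclude them; conversely a bad $j$-set need not be contiguous in the spanning tree you later choose, so deleting tree edges does not control which $j$-subsets of an ancestor chain are bad. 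Your budget computation ($dn/4$ edges surviving the cleaning) therefore rests on density bounds you have not established and that do not follow from the tools in the paper. A secondary, fixable error: the ancestor chain $\{y_i,y_{p(i)},\dots,y_{p^{r-1}(i)}\}$ need not have $r$ elements (e.g.\ if $y_i$ is adjacent to the root), so $f_i$ must instead be padded to a connected $r$-subset of the suffix inside $B_{r-1}(y_i)$.

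The paper avoids cleaning altogether with a different device. For each vertex $x$ and each $\ell\in[r]$, call $x$ \emph{$\ell$-full} if every $\ell$-tuple in the radius-$(\ell-1)$ ball around $x$ has span meeting $L$ nontrivially; every vertex is $(s+1)$-full, so each vertex gets a minimal level $\ell$, and by pigeonhole some level class $U_0$ has $|U_0|\geq n/r$. Lemma~\ref{lemma:large_component} extracts a connected $U\subseteq U_0$ of size $\geq n/(2r)$, and for each $x_i\in U$ the failure of $(\ell-1)$-fullness supplies a witness tuple $f_i\subseteq B_{\ell-2}(x_i)$ whose span meets $L$ \emph{trivially}; these witnesses both force the new vertex $y$ to appear in the support of the linear combination (replacing your cleaning step) and are glued along a spanning tree of $G[U]$ into an $\ell$-tree-like subhypergraph of $H_{W\rightarrow L}$ spanning $\mathbb{F}_q^k$. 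Lemma~\ref{lemma:tree-like} then rules out $\ell\leq s$ and yields the conclusion for $\ell=s+1$. If you want to salvage your route, you would essentially need to reinvent this local fullness stratification (or prove a hypergraph sparsification lemma the paper does not have); as written, the proposal does not close.
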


\begin{proof}
The size of $B$ is at most $e(H)q^{s+1}\leq nd^{r^2} q^{s+1}=2^{O(s^2\log s)} k q^{s+1}.$ Hence, $|\tilde{B}|=2^{O(s^2\log s)} k q^{s}$.

We show that if $L\subset \mathbb{F}_q^k$ has codimension $s$, then $L\cap B$ spans $L$.  For an integer $t$ and vertex $x\in V(G)$, let $B_t(x)$ denote the set of vertices of $G$ of distance at most $t$ from $x$. Given an integer $\ell \in [r]$ and a vertex  $x\in V(G)$, say that 
\begin{itemize}
    \item[(i)] $x$ is \emph{$\ell$-full} if  for every $\ell$-tuple of vertices $f\subset B_{\ell-1}(x)$, the span of $f$ intersects $L$ non-trivially.
\end{itemize}
Observe that every vertex $x$ is $(s+1)$-full, as $L$ has codimension $s$. Also, a vertex is $1$-full if and only if $x\in L$. If a vertex $x$ is $1$-full, color it with color $1$, otherwise, color $x$ with an integer $\ell\in \{2,\dots,s+1\}$ such that $x$ is $\ell$-full, but not $(\ell-1)$-full. Then every vertex is colored with some color. Therefore, there exists a color $\ell\in [r]$ such that the set of vertices $U_0\subset V(G)$ of color $\ell$ has size at least $n/r$. If $\ell=1$, it means that every element of $U_0$ is contained in $L$. But every $n/r$ vertices of $V(G)$ span $\mathbb{F}_q^k$, a contradiction. Hence, we may assume that $\ell\geq 2$. As $|U_0|\geq n/r\geq 4\lambda n/d$, we can apply Lemma \ref{lemma:large_component} to find $U\subset U_0$ of size at least $n/r-\frac{2\lambda n}{d}\geq n/(2r)$ such that $G[U]$ is connected. Let $|U|=m$, let $T$ be a spanning tree of $G[U]$, and let $x_1,\dots,x_m$ be an enumeration of the elements of $U$ such that $x_i$ is a leaf in the subtree $T[x_i,\dots,x_m]$. 

As $x_i$ is not $(\ell-1)$-full, there exists an $(\ell-1)$-tuple  $f_i\subset B_{\ell-2}(x_i)$ such that the span of $f_i$ intersects $L$ trivially. Let $F=\bigcup_{i=1}^m  f_i$. Note that the sets $f_1,\dots,f_{m}$ are not necessarily disjoint. However, $f_i$ is contained in $B_{\ell-2}(x_i)$, hence $f_i$ and $f_j$ are disjoint if the distance of $x_i$ and $x_j$ is more than $2(\ell-2)$. The number of vertices of distance at most $2\ell-4$ is at most $d^{2\ell-4}$, so $U$ contains at least $|U|/d^{2\ell-4}\geq |U|/d^{2s}$ vertices, any pair of which are at distance more than $2\ell-4$. This implies $|F|\geq |U|/d^{2s}\geq n/(4rd^{2s})$, so the elements of $F$ span $\mathbb{F}_q^k$. 

We show that   $H_{W\rightarrow L}[F]$ contains an $\ell$-tree-like spanning subhypergraph. For $i=1,\dots,m$, let $F_i=f_m\cup f_{m-1}\cup\dots\cup f_{m-i+1}$. Let $H_1$ be the empty hypergraph on vertex set $F_1=f_m$, then $H_1$ is $\ell$-tree-like. Having already constructed an $\ell$-tree-like hypergraph $H_i$ on vertex set $F_i$ for some $i\in [m-1]$, we construct $H_{i+1}$ as follows. As $x_{m-i}$ is a leaf in $T[x_{m-i},\dots,x_{m}]$, it has a unique neighbour $x_z$ in this tree with $z\in \{m-i+1,\dots,m\}$. For every $y\in f_{m-i}\setminus F_i$, consider the $\ell$-tuple $g=\{y\}\cup f_{z}$. Here, $y\in f_{m-i}\subset B_{\ell-2}(x_{m-i})$, but $x_{m-i}$ is a neighbour of $x_{z}$, so $y\in B_{\ell-1}(x_{z})$ as well. Also, $f_{z}\subset B_{\ell-2}(x_{z})$, so $g\subset B_{\ell-1}(x_{z})$. Thus, by (i), there is a non-zero linear combination of elements of $g$ contained in $L$. Let $g'_y\subset g$ be such that a proper linear combination of elements of $g'_y$ is in $L$, then $g'_y$ is an edge of $H_{W\rightarrow L}[F]$. Also, me must have $y\in g'_y$, otherwise $g'_y\subset f_{z}$, contradicting that the span $f_{z}$ intersects $L$ trivially. Define $H_{i+1}$ to be the hypergraph on $F_{i+1}=F_i\cup f_{m-i}$ we get by adding the edges $g'_y$ to $H_i$ for every $y\in f_{m-i}\setminus F_i$. Then $H_{i+1}$ is an $\ell$-tree-like subhypergraph spanning $H_{W\rightarrow L}[F_{i+1}]$. In conclusion, $H_m$ is an $\ell$-tree-like subhypergraph of $H_{W\rightarrow L}$ on vertex set $F_m=F$. But then by Lemma \ref{lemma:tree-like}, 
$$\dim(\mbox{pspan}(H_m)\cap L)\geq \dim(F)-\ell+1=k-\ell+1.$$
This is impossible if $\ell\leq s$, as the dimension of $L$ is $k-s$. Therefore, we must have $\ell=s+1$, in which case the previous inequality tells us that $B\cap L$ spans $L$. See Figure \ref{fig:3} for an illustration of the $\ell$-tree-like hypergraph $H_m$.
\end{proof}

\subsection{Optimal constructions for small primes}\label{sect:small_prime}

Finally, we present our construction for small prime powers $q$. In this setting, our goal is to show that there is an explicit strong $s$-blocking set of size $O_{s,q}(k)$, as we imagine $q$ being bounded by some function of $s$.

\medskip

\noindent
\textbf{Construction.} Let $r=s+1$, let $\varepsilon=\delta/2$, where $\delta$ is given by Lemma \ref{lemma:constr2}. Let $d=p+1$, where $p$ is the smallest prime congruent to 1 modulo 4 larger than $16q^{4s}/\varepsilon^2$. Let $\lambda=2\sqrt{d-1}$, then $\lambda/d<\varepsilon /(2q^{2s})$. 
\begin{itemize}
    \item Assume that $k$ is sufficiently large satisfying the requirements of Lemma \ref{lemma:constr2}, and sufficiently large with respect to $q$. Let $W_0\subset \mathbb{F}_q^k$ be a set of $n_0=O(k)$ vectors such that any $(1-\delta)n_0$ elements of $W_0$ spans $\mathbb{F}_q^k$.
    \item Let $n_0>n=(1+o(1))n_0$ for which there is an explicit $(n,d,\lambda)$-graph $G$. This exists by Lemma \ref{lemma:Ramanujan} (here, we are using the assumption that $k$ is sufficiently large with respect to $q$, so $n$ is sufficiently large with respect to $d$). Let $W$ be an arbitrary $n$ element subset of $W_0$, and associate the elements of $W$ to the vertices of $G$. Note that any $(1-\varepsilon)n$ elements of $W$ span $\mathbb{F}_q^k$.

    \item Let $H$ be the $r$-uniform hypergraph on vertex set $W$, where $f\in W^{(r)}$ is an edge if there is some $x\in W$ such that every element of $f$ is within distance at most one from $x$ in $G$.

    \item Let $B=\bigcup_{f\in H}\mbox{span}(f)$.
\end{itemize}

\begin{figure}
    \centering
    \begin{tikzpicture}[scale=0.8]

        \path[draw,very thick,pattern=horizontal lines] (-3.5,1) --(0,0.5) -- (0,2.5);
        \node at (-1.16,1.333) [fill=white]{$f_{z_1}$};
        \path[draw,very thick,pattern=vertical lines] (3.5,1) --(0,0.5) -- (0,2.5);
        \node at (1.16,1.333) [fill=white]{$f_{z_2}$};
      
        \draw (0,3) ellipse (2 and 1);
        \draw[rotate around={45:(4,1)}] (4,1) ellipse (1 and 2);
        \draw[rotate around={-45:(-4,1)}] (-4,1) ellipse (1 and 2);
        \draw (0,0) ellipse (2 and 1);

        \node[vertex,label=below:$x_{y_0}$] (y0) at (0,0.5) {};
        \node[vertex,label=above:$x_{y_1}$] (y1) at (0,2.5) {};
        \node[vertex,label=below:$x_{z_1}$] (z1) at (-3.5,1) {};
        \node[vertex,label=below:$x_{z_2}$] (z2) at (3.5,1) {};

        \draw[ultra thin] (z1) -- (y1) -- (z2) ;

        \node at (0,-1.4) {$V_{y_0}$};
        \node at (0,4.4) {$V_{y_1}$};
        \node at (-4.9,2) {$V_{z_1}$};
        \node at (4.9,2) {$V_{z_2}$};

         \node[vertex] (x1) at (-0.7,0.2) {};
         \node[vertex] (x2) at (-1.3,0.4) {};
         \node[vertex] (x3) at (-1,-0.4) {};
         \node[vertex] (x4) at (1.3,0.1) {};
         \node[vertex] (x5) at (1,-0.4) {};

         \draw[very thick] (y0) -- (x1) -- (x2);  \draw[very thick] (x1) -- (x3);
          \draw[very thick] (y0) -- (x4) -- (x5);

         \node[vertex] (x1) at (-0.7,2.8) {};
         \node[vertex] (x2) at (-1.3,3) {};
         \node[vertex] (x3) at (-1,3.3) {};
         \node[vertex] (x4) at (1.3,3.1) {};
         \node[vertex] (x5) at (1,2.8) {};
         \node[vertex] (x6) at (-0.5,3.6) {};

          \draw[very thick] (y1) -- (x1) -- (x2)  ;  \draw[very thick] (x1) -- (x3) -- (x6) ;
          \draw[very thick] (y1) -- (x5) -- (x4);

          \node[vertex] (x1) at (-4.2,0.8) {};
         \node[vertex] (x2) at (-4.5,0) {};
         \node[vertex] (x3) at (-3.5,1.8) {};

        \draw[very thick] (x3) -- (z1) -- (x1) -- (x2)  ;  

         \node[vertex] (x1) at (4.2,0.8) {};
         \node[vertex] (x2) at (4.5,0) {};
         \node[vertex] (x3) at (3.5,1.8) {};
        \node[vertex] (x4) at (4.5,1.3) {};

        \draw[very thick] (x3) -- (z2) -- (x1) -- (x2)  ;  
        \draw[very thick] (z2) -- (x4);

    \end{tikzpicture}
    \caption{An illustration for the proof of Theorem \ref{thm:small_q}.}
    \label{fig:2}
\end{figure}
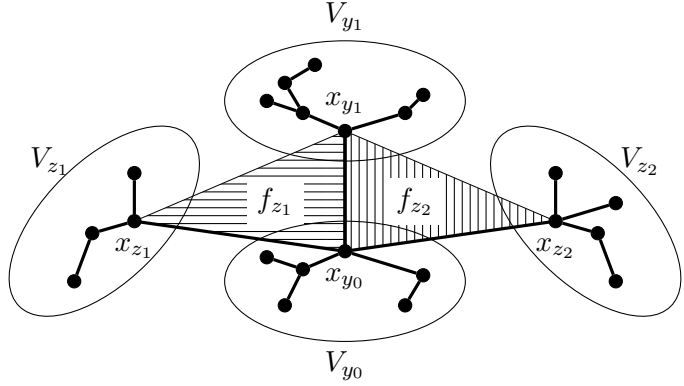

\begin{theorem}\label{thm:small_q}
$\tilde{B}$ is a strong $s$-blocking set of size $q^{O(s^2)} k$ in $P(k-1,q)$.
\end{theorem}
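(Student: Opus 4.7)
Proof proposal:

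The size bound is a direct calculation: each vertex $x \in V(G)$ contributes at most $\binom{d+1}{r} \leq (d+1)^{s+1}$ edges to $H$ (the $r$-subsets of $N[x]$), and each edge $f$ spans an $r$-dimensional subspace (since any $r$ elements of $W$ are linearly independent) containing $O(q^s)$ projective points; since $d = O(q^{4s}/\varepsilon^2)$ with $\varepsilon$ a fixed constant, this yields $|\tilde B| \leq n \cdot (d+1)^{s+1} \cdot O(q^s) = q^{O(s^2)}\,k$. For the blocking property, fix a codimension-$s$ subspace $L \leq \mathbb{F}_q^k$ and color each $v \in W$ by $\bar v := \pi_L(v) \in \mathbb{F}_q^s$, where $\pi_L$ is the quotient map; write $V_y := \{v \in W : \bar v = y\}$. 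The aim, via Lemma \ref{lemma:tree-like}, is to construct an $(s+1)$-tree-like subhypergraph of $H_{W \to L}$ on a vertex set of dimension $k$, which gives $\dim(\mbox{pspan} \cap L) \geq k - s = \dim L$ and hence $B \cap L$ spans $L$.

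Following the template of Section \ref{sect:s=2}, for each edge $f \in H$, since $|f| = r > s$ the projections $\{\bar v : v \in f\}$ are linearly dependent, and we take a minimal subset $\phi(f) \subseteq f$ admitting a proper linear combination in $L$. Partition the resulting hypergraph $F := \{\phi(f) : f \in H\} \subseteq H_{W \to L}$ by edge-size as $F = F_1 \cup \cdots \cup F_r$. For each $\ell \in \{1, \ldots, s\}$, an $\ell$-tree-like subhypergraph of $F_\ell$ on a vertex set of dimension $k$ would give, by Lemma \ref{lemma:tree-like}, $\dim(\mbox{pspan}(F) \cap L) \geq k - \ell + 1 > k - s = \dim L$, contradicting $\mbox{pspan}(F) \cap L \subseteq L$; combined with Lemma \ref{lemma:component} and the expander mixing lemma, together with the parameter bound $\lambda/d < \varepsilon/(2q^{2s})$, this iteratively rules out large ``short'' substructures of $F_\ell$ for $\ell \leq s$ and forces the full-size class $F_{s+1}$ to be dense enough for us to extract the desired tree-like hypergraph.

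The main technical hurdle, and the place where the small-prime case diverges from Theorem \ref{thm:large_prime}, is that Lemma \ref{lemma:constr2} requires $(1-\varepsilon)n$ elements to guarantee spanning $\mathbb{F}_q^k$, rather than an arbitrarily small fraction as in the large-prime regime. We therefore cannot extract the final tree-like hypergraph from a single pigeonhole class as in Section \ref{sect:large_prime}; instead, we collect all color classes $V_y$ of substantial size (those with $|V_y| \geq \varepsilon n / q^s$, whose union covers at least $(1-\varepsilon) n$ vertices of $W$ and therefore spans $\mathbb{F}_q^k$), build a local $r$-tree-like structure within each $V_y$ using $r$-subsets of $N[x] \cap V_y$ for $x \in V_y$ with $|N[x] \cap V_y| \geq r$ (a proper combination summing to zero exists among $r$ vectors of identical projection whenever $q \geq 3$; when $q = 2$ and $r$ is odd, we additionally incorporate one vertex from $V_0 = L \cap W$ to correct parity), and stitch these into a single tree-like hypergraph using cross-color ``bridging'' edges of $H_{W \to L}$ whose projections admit a proper linear dependence, as illustrated by the shaded triangles in Figure \ref{fig:2}.
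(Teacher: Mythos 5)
Your size bound and your overall target (build an $(s+1)$-tree-like subhypergraph of $H_{W\to L}$ whose vertex set spans $\mathbb{F}_q^k$ and whose edges sit inside edges of $H$, then invoke Lemma \ref{lemma:tree-like}) are correct, and your partition of $W$ by the quotient map $\pi_L$ into classes $V_y$ is exactly the paper's starting point. However, the middle of your argument --- stratifying $F=\{\phi(f)\}$ by edge size and ``iteratively ruling out'' large $\ell$-tree-like substructures for $\ell\le s$ so that $F_{s+1}$ is ``dense enough'' --- is not carried out and does not obviously work for general $s$: ruling out a spanning $\ell$-tree-like structure for each $\ell$ separately does not produce a single connected structure all of whose edges have size $s+1$, which is why the paper abandons the Section \ref{sect:s=2} template here. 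You then switch to the coset-class approach, but the two decisive ingredients are missing. First, inside a class: the paper's key observation is that for $x,y\in U_z$ one has $x-y\in L$, so \emph{every} edge of $G[U_z]$ (for $z\neq 0$) is already a $2$-element edge of $H_{W\to L}$ contained in an edge of $H$; one then takes the largest connected component $V_z$ of $G[U_z]$ (Lemma \ref{lemma:large_component}) and a spanning tree of it. Your construction via full $r$-subsets of $N[x]\cap V_y$ requires every vertex to have $\ge r$ same-class neighbours (not guaranteed), leaves out vertices that do not, ignores connectivity of $G[V_y]$, and creates the spurious $q=2$ parity problem; the fix of adjoining a vertex of $V_0$ does not even restore a zero projection when $r$ is odd.

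Second, and most seriously, the ``stitching'' step is asserted without justification. A bridging hyperedge joining representatives of different classes is only usable if it is contained in an edge of $H$, i.e.\ if all its vertices lie within distance one of a common vertex of $G$; nothing in your proposal guarantees that suitable representatives $x_z\in V_z$ can be chosen mutually close in $G$. The paper handles this with Lemma \ref{lemma:star}: it finds a single hub $x_{y_0}\in V_{y_0}$ adjacent to some $x_z\in V_z$ for every large class $z$, so that $\{x_z\}\cup\{x_y:y\in Y\}$ lies in $B_1(x_{y_0})$. Moreover, ``projections admitting a proper linear dependence'' is not automatic for an arbitrary collection of classes; the paper first extracts a basis $Y$ of the span of the set $S$ of surviving projections, so that each $z\in S\setminus Y$ has a dependency on $Y$ with a nonzero coefficient on $z$, yielding an edge $f_z\ni x_z$ of $H_{W\to L}$, and then orders the vertices (trees first, then $\{x_z\}_{z\in S\setminus Y}$, then $\{x_y\}_{y\in Y}$) to certify tree-likeness. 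Without the hub vertex from Lemma \ref{lemma:star} and the basis $Y$, the stitching step, and hence the proof, does not go through.
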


\begin{proof}
The size of $B$ is at most $q^{s+1}e(H)\leq q^{s+1}(2d)^r n \leq q^{O(s^2)}k$, so  $|\tilde{B}|=q^{O(s^2)}k$ as well.

We show that if $L<\mathbb{F}_q^k$ has codimension $s$, then $B\cap L$ spans $L$. Let $w_1,\dots,w_s$ be a basis of the orthogonal complement of $L$. For every $z\in \mathbb{F}_q^s$, let $U_z\subset V(G)$ be the set of vertices $x\in V(G)$ such that $\langle x,w_i\rangle =z(i)$. Then $U_z$ is the intersection of $W$ with a coset of $L$. In particular, the sets $U_z$ for $z\in  \mathbb{F}_q^s$ partition $V(G)$ into $q^s$ parts, and $U_0=L\cap W$. Observe that for every $z$, if $x,y\in U_z$, then $x-y\in L$. Hence, if $z\neq 0$, then $\{x,y\}$ is an edge of the proper linear combination hypergraph $H_{W\rightarrow L}$. In particular, every edge of $G[U_z]$ is an edge of $H_{W\rightarrow L}$. In general, we observe that if $x_1\in U_{z_1},\dots,x_{u}\in U_{z_u}$, then a linear combination $a_1 x_1+\dots+a_u x_u$ is contained in $L$ if and only if $a_1z_1+\dots+a_u z_u=0$.

Let $b=\lambda n/d$, and let $S\subset \mathbb{F}_q^s$ be the set of vectors $z$ such that $|U_z|>(q^s+2) b$. Furthermore, let $V_z\subset U_z$ be the vertex set of the largest component of $G[U_z]$, then $|V_z|\geq |U_z|-2b> q^s b$ by Lemma \ref{lemma:large_component}. Let $V'=\bigcup_{z\in S}V_z$, then
$$|V'|=\sum_{z\in S}|V_z|\geq n-\sum_{z\not\in S}|U_z|-\sum_{z\in S}2b\geq n-q^s (q^s+2)b-2bq^s>n(1-2q^{2s}\lambda/d)\geq n(1-\varepsilon).$$
Therefore, $V'$ spans $\mathbb{F}_q^k$. Next, our goal is to construct an $r$-tree-like subhypergraph of $H_{W\rightarrow L}$, whose every edge is contained in some edge of $H$.

If $S$ only contains the zero vector, then $V'=V_0$, and $V'\subset L$, which contradicts that $L$ does not span the whole space. Hence, we may assume that $S$ contains at least one non-zero vector. Let $Y\subset S$ be a basis of the subspace of $\mathbb{F}_q^s$ spanned by $S$. So $1\leq |Y|\leq s$, and select an arbitrary $y_0\in Y$.  Apply Lemma \ref{lemma:star} to the sets $(V_z)_{z\in S}$ with $V_{y_0}$ playing the role of $V_0$. We have $|V_{z}|\geq |S|\lambda n/d$ for every $z\in S$, so the condition of Lemma \ref{lemma:star} is satisfied. Hence, there exists a vertex $x_{y_0}\in V_{y_0}$ such that for every $z\in S\setminus \{y_0\}$, there exists some $x_z\in V_z$ such that $\{x_{y_0},x_z\}$ is an edge of $G$. Now for every $z\in S\setminus Y$, the $|Y|+1$ vectors $z,\{y\}_{y\in Y}$ are linearly dependent. More precisely, there is a linear combination summing to zero, where the coefficient of $z$ is not zero. Hence, $H_{W\rightarrow L}$ contains an edge $f_{z}$ such that $x_{z}\in f_{z}\subset \{x_{z}\}\cup \{x_y:y\in Y\}$. Observe that $f_{z}$ is also a subset of some edge of $H$. The union of these edges $f_{z}$ for $z\in S\setminus Y$ is an $(|Y|+1)$-tree-like subhypergraph of $H_{W\rightarrow L}$, whose union is the set of vertices $\{x_{z}:z\in S\}$. In order to cover the rest of the vertices of $V'$, we observe that $G[V_z]$ has a spanning tree $T_z$. But then there is an enumeration $v_1,\dots,v_{|V_z|}$ of the vertices of $T_z$ such that $v_i$ is a leaf in $T_z[v_i,\dots,v_{|V_z|}]$, and $v_{|V_z|}=x_z$. Now the edges $f_{z}$ for $z\in S\setminus Y$ together with the edges of the trees $T_z$ for $z\in S$ form an $(|Y|+1)$-tree-like subhypergraph $H_0$ of $H_{W\rightarrow L}$ on vertex set $V'$. This is witnessed by any ordering of $V'$ in which the vertices $\{x_y:y\in Y\}$ come last, preceded by any ordering of $\{x_z:z\in S\setminus Y\}$, and before that any ordering of the rest of the vertices which respects the orderings of the trees $T_z$. See Figure \ref{fig:2} for an illustration.

The proper span of $H_0$ is contained in $B$, and by Lemma \ref{lemma:tree-like}, 
$$\dim(\mbox{pspan}(H_0)\cap L)\geq \dim(V')-(|Y|+1)+1\geq k-s.$$
Therefore, we must have $|Y|=s$, and $B\cap L$ spans $L$, finishing the proof.
\end{proof}

\section*{Acknowledgments}
We would like to thank the organizers of DiGResS workshop at Ume\r{a}, 2024, where this research was initiated. Also, we would like to thank  Peter Beelen, Alessandro Neri, and Tovohery H. Randrianarisoa for valuable discussions, and the anonymous referee for many useful comments and suggestions.

\section*{Appendix --- Lemma \ref{lemma:constr1}}

We prove the missing part of Lemma \ref{lemma:constr1}. We provide a construction that is a concatenated code, whose outer code is a Reed-Solomon code, and inner code is a linear code meeting the Gilbert-Varshamov bound (found by brute-force search). See e.g. Section 12 of \cite{Roth} for the relevant definitions. Here, we give a self-contained proof without the use of coding terminology.

\begin{theorem}
    For every integer $s$ and $\varepsilon>0$, there exists $C$ such that the following holds. 
Let $q$ be a prime power and $k$ be an integer such that $k\geq \varepsilon q$, and both are sufficiently large with respect to $s$ and $\varepsilon$. Then there exists a set $W\subset \mathbb{F}_q^k$ of size at most $Ck$ such that any $s+1$ elements of $W$ are linearly independent, and any $\varepsilon |W|$ elements of $W$ span $\mathbb{F}_q^k$. Moreover, such a set $W$ can be constructed in $k^{O_{s,\varepsilon}(1)}$ time.
\end{theorem}

\begin{proof}
Choose  $R \in (0, 1)$ and an integer $M \ge 2$ strictly depending on $\varepsilon$, such that $R+ \frac{1}{M}< \frac{\varepsilon}{2}$.
Let $m$ be the minimal integer such that $m \ge s+2$ and $q^m \ge \frac{k}{mR}$. Define $Q = q^m$, and let $K = k/m$ (where we assume that $m$ divides $k$, the general case being almost identical, but slightly more technical), and  $N = \lceil K/R \rceil$. By our choice of $m$, we have $Q>N$, so we can choose a set $S = \{\alpha_1, \dots, \alpha_N\}$  of $N$ distinct elements of $\mathbb{F}_Q^*$.

Fix an arbitrary basis of $\mathbb{F}_Q$ over $\mathbb{F}_q$ and define the canonical linear isomorphism $\phi : \mathbb{F}_Q \rightarrow \mathbb{F}_q^m$.

\begin{claim}
There exists a linear mapping $L : \mathbb{F}_Q \to \mathbb{F}_{q}^{mM}$ of the form $L(x) = \left(\phi(\gamma_1 x), \dots, \phi(\gamma_M x)\right)$ such that for all $x \in \mathbb{F}_Q^*$, the  fraction of non-zero coordinates of $L(x)$ is strictly bounded below by $\delta_{0} = 1 - 2/M$. Moreover, if $v\in \mathbb{F}_q^{mM}\setminus\{0\}$ with at most  $s+1$ non-zero entries, then $\langle v,L(x)\rangle\neq 0$ for some $x\in \mathbb{F}_Q^*$.
\end{claim}
\begin{proof} Pick $\vec{\gamma} = (\gamma_1, \dots, \gamma_M) \in (\mathbb{F}_Q)^M$ randomly from the uniform distribution, and define $L_{\vec{\gamma}}$ accordingly. Let $d = \lfloor \delta_{0} mM \rfloor$. For any fixed non-zero $x \in \mathbb{F}_Q^*$,  $L_{\vec{\gamma}}(x)$ is uniformly distributed over $(\mathbb{F}_q^{m})^M$. The probability that $L_{\vec{\gamma}}(x)$ has less than $d$ non-zero entries is the volume of the Hamming ball $V_q(mM, d-1)$ divided by $q^{mM}$. We apply the union bound over all $Q-1$ non-zero elements of $\mathbb{F}_Q^*$
$$\mathbb{P} \left[ \exists x \neq 0 : |\supp(L_{\vec{\gamma}}(x))|<d\right] \le (q^m - 1) \frac{V_q(mM, d-1)}{q^{mM}} < q^m \frac{q^{mMH_q(\delta_{0})}}{q^{mM}},$$
where $$H_q(t) = t \log_q(q-1) - t \log_q(t) - (1-t) \log_q(1-t)$$ is the $q$-ary entropy function. As $\lim_{q\rightarrow\infty} H_q(t)=t$, in case $\delta_0=1-2/M$, we have $H_q(\delta_0)< 1-1.5/M$ for $q$ sufficiently large, so the right-hand-side is less than 1/2.

The number of vector $v$ with at most $s+1$ non-zero entries is at most $q^{s+1}(mM)^{s+1}$. The probability that for a fixed non-zero $v$, we have $\langle v,L_{\vec{\gamma}}(x)\rangle=0$ for every $x$ is $1/q^m$. Therefore, by the union bound
$$\mathbb{P} \left[ \exists v \neq 0 : |\supp(v)|\leq s+1\text{ and }\forall x: \langle v,L_{\vec{\gamma}}(x)\rangle=0\right] \le \frac{q^{s+1}(mM)^{s+1}}{q^m}.$$
Thus, if $m>s+1$ and $q$ is sufficiently large, the right-hand-side is less than $1/2$.
Hence, at least one valid  $\vec{\gamma}$ exists.
\end{proof}

As $Q$ is the smallest power of $q$ larger than $\max\{q^{s+2},k/(mR)\}$, we have $Q\leq k^{2s}$ for $k\geq \varepsilon q$ sufficiently large. Therefore, by searching all $M$-tuples $(\gamma_1,\dots,\gamma_M)$ of $\mathbb{F}_Q$, we can find the suitable linear mapping $L$ in $k^{O(sM)}$ time.

Let $\mathcal{P}_K$ be the $\mathbb{F}_Q$-vector space of polynomials with degree strictly less than $K$, the $\mathcal{P}_K\cong \mathbb{F}_q^{Km}=\mathbb{F}_q^k$. Define the $\mathbb{F}_q$-linear transformation $\Phi : \mathcal{P}_K \to \mathbb{F}_q^{NmM}$ as $$\Phi(P) = \left( L(P(\alpha_1)), L(P(\alpha_2)), \dots, L(P(\alpha_N)) \right).$$
Let $n = NmM$. By fixing an $\mathbb{F}_q$-basis for $\mathcal{P}_K$, the transformation $\Phi$ is represented by a matrix $A \in \mathbb{F}_q^{n \times k}$, such that $\Phi(y) = Ay$ for any coefficient vector $y \in \mathbb{F}_q^k$. We define $W \subset \mathbb{F}_q^k$ as the set of the $n$ columns of the transposed matrix $A^T$.

\begin{claim}
$|W| \le Ck$ for a constant $C$ depending only on $\varepsilon$.
\end{claim}

\begin{proof} We have $|W| = n = NmM$ and $k=mK$. Therefore, $|W|/k=NM/K\leq (K/R+1)M/K=M/R+M/K<M/R+1$. As $R$ and $M$ only depend on $\varepsilon$, this proves the claim.
\end{proof}

\begin{claim}
 If $k$ is sufficiently large with respect to $s,\varepsilon$, then any $s+1$ elements of $W$ are linearly independent over $\mathbb{F}_q$.
 \end{claim}
 
 \begin{proof} Assume to the contrary that this is false, then there exists a non-zero vector $v \in \mathbb{F}_q^n$ with at most $s+1$ non-zero entries such that $A^T v = 0$. This implies that for every $P\in \mathcal{P}_K$, we have $\langle v,\Phi(P)\rangle=0$. Recall that $\Phi(P)=(L(P(\alpha_1)),\dots,L(P(\alpha_N))$. Write $v=(v_1,\dots,v_N)$, then 
$\langle v,\Phi(P)\rangle=\sum_{i=1}^N \langle v_i,L(P(\alpha_i))\rangle$.
Let $j\in [N]$ such that $v_j\neq 0$. As $|\supp(v_j)|\leq s+1$, there exists some $x_j$ such that $\langle v_j,L(x_j)\rangle\neq 0$. Also, there are at most $s+1$ indices $i\in [N]$ such that $v_i\neq 0$. Therefore, we can find a polynomial of degree at most $s < K$ (which is satisfied if  $k$ is sufficiently large with respect to $s,\varepsilon$) such that $P(\alpha_j)=x_j$ and $P(\alpha_i)=0$ for all indices $i\neq j$ for which $v_i \neq 0$. But then $\langle v,\Phi(P)\rangle=\langle v_j,L(x_j)\rangle\neq 0$.
 \end{proof}

\begin{claim} 
Any subset of $\varepsilon|W|$ elements of $W$ spans $\mathbb{F}_q^k$
\end{claim} 
\begin{proof} 
If $U\subset W$ does not span $\mathbb{F}_q^k$, then there exists a non-zero $v\in \mathbb{F}_q^k$ orthogonal to all elements of $U$. Equivalently, $Av(u)=0$ for every $u\in U$. But observe that we can write  $Av=\Phi(P)$ for some $P\in \mathcal{P}_K$. Here, $\Phi(P)=(L(P(\alpha_1)),\dots,L(P(\alpha_N))$. As $P$ has degree less than $K$, we have $P(\alpha_i)=0$ for at most $K-1$ different indices $i\in [N]$. Moreover, if $P(\alpha_i)\neq 0$, then $L(P(\alpha_i))$ has at least $\delta_0$ non-zero coordinates. In total, we have
$$|U|\leq (K-1)mM+(1-\delta_0)n=\left(\frac{K-1}{N}+\frac{2}{M}\right)n\leq \left(2R+\frac{2}{M}\right)n\leq \varepsilon n.$$
\end{proof}

\end{proof}

\end{document}